\newtheorem{theorem}{Theorem}
\newtheorem{lemma}[theorem]{Lemma}
\newtheorem{observation}[theorem]{Observation}
\newtheorem{corollary}[theorem]{Corollary}
\newtheorem{conjecture}[theorem]{Conjecture}
\newcommand\size[1] {\left|{#1}\right|}
\newcommand\Set[2] {\left\{{#1}:\,{#2}\right\}}
\newcommand\Setx[1] {\left\{{#1}\right\}}
\newcommand{\sm}{\setminus}
\newcommand{\proj}[1]{\mathbb{P}^{#1}}
\newcommand{\white}[1]{#1^\circ}
\newcommand{\black}[1]{#1^\bullet}
\newcommand{\IB}{\mathsf{IB}}
\newcommand{\EB}{\mathsf{EB}}
\newcommand{\RR}{\mathbb R}
\newcommand{\CC}{\mathsf C}
\newcommand{\KK}{\mathsf K}
\newcommand{\LL}{\mathsf L}
\newcommand{\lex}{\prec_{\mathrm{lex}}}
\newcommand{\subs}[2]{V(#1,#2)}
\newcommand{\qk}[2]{\mathsf{QK}(#1,#2)}
\newcommand{\qbblack}[2]{\mathsf{QB}^{\bullet}(#1,#2)}
\newcommand{\qbwhite}[2]{\mathsf{QB}^{\circ}(#1,#2)}
\newcommand{\qbblackx}[2]{\mathsf{QB}^{\bullet}_1(#1,#2)}
\newcommand{\qbwhitex}[2]{\mathsf{QB}^{\circ}_1(#1,#2)}
\newcommand{\level}[2]{\ell^{#1}(#2)}
\newcommand{\layer}[3]{V_{#1}(#2,#3)}
\newcommand{\V}[3][\,]{V_{#1}(#2,#3)}
\newcommand{\core}[1]{\mathrm{core}(#1)}
\newcommand{\clone}[2]{#2\langle#1\rangle}
\newcommand{\fig}[1]{\raisebox{-.5\height}{%
    \includegraphics[page=#1]{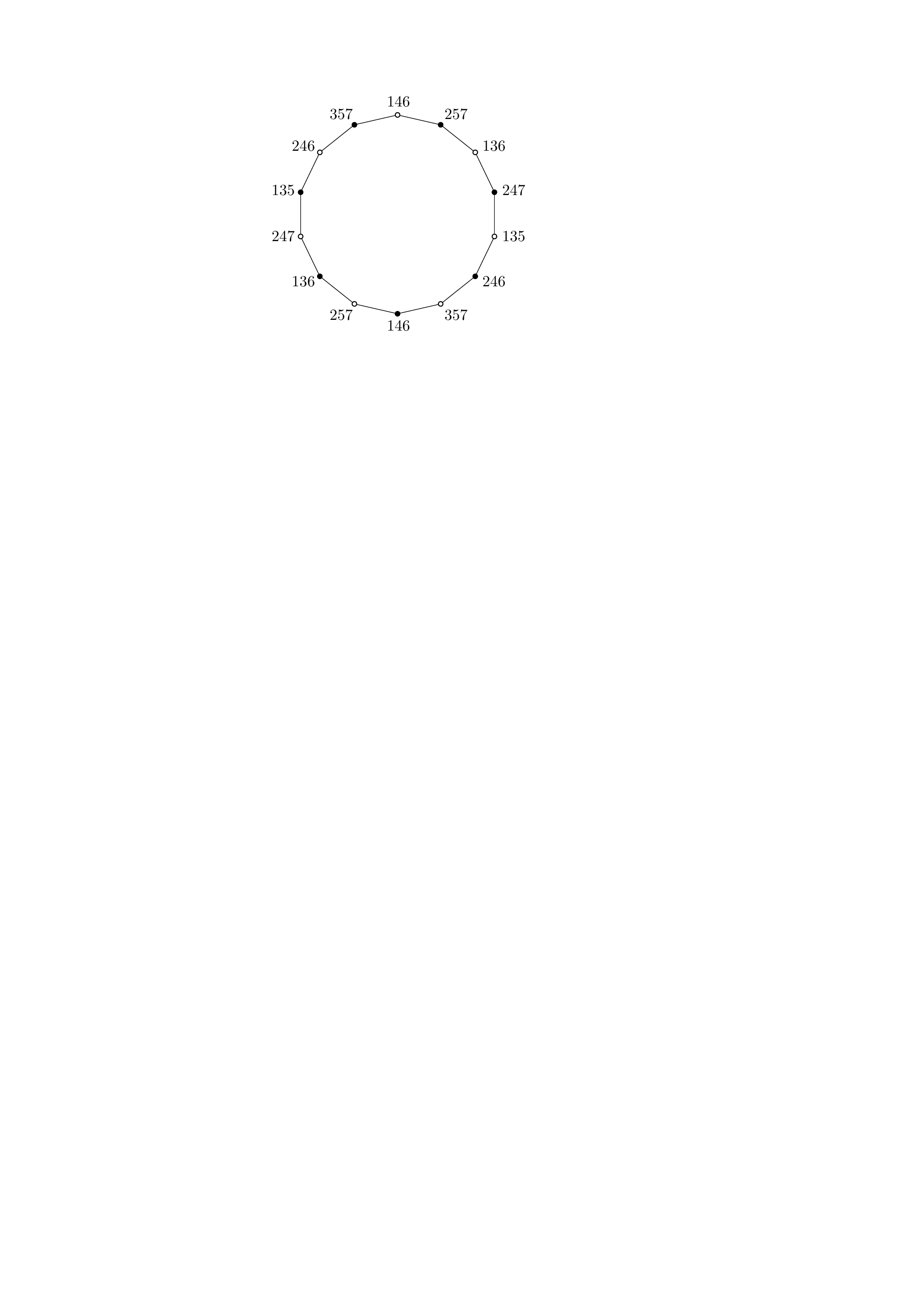}}}
\newcommand{\hf}{\hspace*{0mm}\hfill}
\title{\textbf{Schrijver graphs\\and projective quadrangulations}}%
\author{Tom\'{a}\v{s} Kaiser\thanks{Department of Mathematics,
    Institute for Theoretical Computer Science (CE-ITI), and European
    Centre of Excellence NTIS (New Technologies for the Information
    Society), University of West Bohemia, Pilsen, Czech
    Republic. Supported by project GA14-19503S of the Czech Science
    Foundation. Email: \texttt{kaisert@kma.zcu.cz}.}
  \and
    Mat\v{e}j Stehl\'{\i}k\thanks{Laboratoire G-SCOP, Universit\'e
    Grenoble Alpes, France. Partially supported by ANR project Stint
    (ANR-13-BS02-0007) and by LabEx PERSYVAL-Lab (ANR-11-LABX-0025).
    Email: \texttt{matej.stehlik@grenoble-inp.fr}.}
  }
\date{}
\begin{document}
\maketitle

\begin{abstract}
  In a recent paper [\textit{J.\ Combin.\ Theory Ser.\ B}, 113 (2015), pp.~1--17],
  the authors have extended the concept
  of quadrangulation of a surface to higher dimension, and showed that
  every quadrangulation of the $n$-dimensional projective space
  $\proj{n}$ is at least $(n+2)$-chromatic, unless it is bipartite.
  They conjectured that for any integers $k\geq 1$ and $n\geq 2k+1$,
  the Schrijver graph $SG(n,k)$ contains a spanning subgraph which is
  a quadrangulation of $\proj{n-2k}$. The purpose of this paper is to
  prove the conjecture.
\end{abstract}


\section{Introduction}
\label{sec:intro}

Given any integers $k\geq 1$ and $n\geq 2k$, the \emph{Kneser graph}
$KG(n,k)$ is the graph whose vertex set consists of all $k$-subsets of
$[n]=\Setx{1,\dots,n}$, and with edges joining pairs of disjoint
subsets. It was conjectured by Kneser~\cite{Kne55}, and proved by
Lov\'{a}sz~\cite{Lov78} in 1978, that the chromatic number of $KG(n,k)$
is $n-2k+2$.

Schrijver~\cite{Sch78} found a vertex-critical subgraph $SG(n,k)$ of
$KG(n,k)$ whose chromatic number is also $n-2k+2$. (Recall that a
graph is \emph{vertex-critical} if the deletion of any vertex
decreases the chromatic number.)

In~\cite{KS15}, a \emph{quadrangulation} of a space triangulated by a
(generalised) simplicial complex $\KK$ is defined as a spanning
subgraph $G$ of the $1$-skeleton $\KK^{(1)}$ such that the induced
subgraph of $G$ on the vertex set of any maximal simplex of $\KK$ is
complete bipartite with at least one edge. 

Particular attention was given in~\cite{KS15} to quadrangulations of
projective spaces, and it was shown that if $G$ is a quadrangulation
of the projective space $\proj n$, then the chromatic number of $G$ is
at least $n+2$. By constructing suitable projective quadrangulations
of $\proj n$ homomorphic to Schrijver graphs, an alternative proof of
Schrijver's result was obtained.

The purpose of this paper is to prove Conjecture 7.1 from~\cite{KS15}
by establishing the following result:
 
\begin{theorem}\label{t:main}
  For any $k\geq 1$ and $n > 2k$, the graph $SG(n,k)$ contains a
  spanning subgraph $QG(n,k)$ that embeds in $\proj{n-2k}$ as a
  quadrangulation. In particular, $\chi(QG(n,k)) = n-2k+2$.
\end{theorem}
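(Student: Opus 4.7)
The plan is to construct $QG(n,k)$ explicitly as a spanning subgraph of $SG(n,k)$ that embeds in $\proj{n-2k}$ as a quadrangulation; the chromatic equality then falls out at once. For the upper bound, $QG(n,k)\subseteq SG(n,k)\subseteq KG(n,k)$ combined with Lov\'asz's theorem~\cite{Lov78} gives $\chi(QG(n,k))\leq n-2k+2$. For the matching lower bound, \cite{KS15} states that any non-bipartite quadrangulation of $\proj m$ has chromatic number at least $m+2$; we therefore also need to check that $QG(n,k)$ is non-bipartite, which should be visible from the construction as an odd closed walk tracing a non-contractible loop in $\proj{n-2k}$.

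For the construction, I would model a centrally symmetric triangulation of $S^{n-2k}$ by \emph{signed sets}, that is, pairs $\sigma=(\black\sigma,\white\sigma)$ of disjoint subsets of $[n]$, or equivalently sign vectors in $\{\addblack,\addwhite,0\}^n$. Signed sets are the face poset of a centrally symmetric regular CW-decomposition of $S^{n-1}$ with antipodal involution swapping the two colours. The first step is to isolate an $(n-2k)$-dimensional antipodally symmetric subcomplex $\KK$ whose realisation is $S^{n-2k}$ and on whose maximal faces $\sigma$ both $\black\sigma$ and $\white\sigma$ contain stable $k$-subsets of $[n]$. A cyclic greedy procedure would then assign, to every sub-face $\tau\leq\sigma$, a pair consisting of a stable $k$-subset of $\black\tau$ and one of $\white\tau$; taking all such pairs over all $\tau\leq\sigma$ and $\sigma$ maximal, and descending through the antipodal quotient $\KK/\mathbb Z_2\cong\proj{n-2k}$, defines $QG(n,k)$.

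The main technical obstacle is the quadrangulation condition: for each maximal face $\sigma$ of $\KK$, the induced subgraph of $SG(n,k)$ on the stable $k$-subsets produced by sub-faces $\tau\leq\sigma$ must be a complete bipartite graph with at least one edge. Disjointness across the two colour classes is automatic since $\black\sigma\cap\white\sigma=\emptyset$. The delicate part is \emph{completeness}: as $\tau$ ranges over sub-faces, several distinct stable $k$-subsets may arise on each colour, and every cross pair must form an edge of $SG(n,k)$. I expect this to require an induction on the cyclic block structure of $\sigma$, first reducing to a minimal core case by trimming cyclically inessential entries, then lifting back via a cloning-type operation that reconstructs the full picture from the reduced one. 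Two auxiliary checks round off the argument: one must show that every stable $k$-subset of $[n]$ actually arises from some sub-face, so that $QG(n,k)$ is genuinely spanning, and that the antipodal quotient is injective on maximal faces, so that $\KK/\mathbb Z_2$ really embeds as a CW-subcomplex of $\proj{n-2k}$ rather than merely mapping to it.
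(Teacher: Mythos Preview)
Your proposal correctly identifies the overall architecture --- build an antipodally symmetric triangulation of $S^{n-2k}$ whose bichromatic edges form a spanning subgraph of $SG(n,k)$, quotient to $\proj{n-2k}$, and invoke~\cite{KS15} for the lower bound --- and you are right that the quadrangulation (complete-bipartite) condition on maximal faces is the crux. But what you have written is a plan, not a proof. The subcomplex $\KK$ of the signed-set sphere is never actually specified, the ``cyclic greedy procedure'' is never defined, and the induction on cyclic block structure is asserted rather than carried out. The signed-set framework is natural and indeed underlies the construction in~\cite{KS15}, but the quadrangulation built there is \emph{not} spanning; arranging for the vertices to biject onto $\V n k$ while keeping every maximal face complete bipartite is precisely the content of the present theorem, and you have not supplied that argument. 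In particular, it is not clear that any greedy assignment of stable $k$-sets to sub-faces of a fixed signed-set complex can hit every element of $\V n k$ exactly once.

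The paper's route is quite different from what you sketch. Rather than working inside a fixed signed-set complex, it constructs the triangulation $\qk n k$ by an explicit double recursion on $n$ and $k$: the base case $\qk{2k+1}k$ is a $(4k+2)$-cycle; for larger $n$ one thickens $\qk{n-1}k$ to a complex $\qbblackx n k$ by adding clones of vertices (labelled via $A\mapsto\core A\cup\{n\}$) and contracting certain monochromatic edges, then fills its interior with a relabelled copy of $\qbwhite{n-2}{k-1}$, and finally glues two hemispheres. The spanning property (Lemma~\ref{l:all}), the disjointness of labels across bichromatic edges (Lemma~\ref{l:disj}), and the absence of monochromatic maximal faces (Lemma~\ref{l:mono}) are each verified by induction along this recursion; the key structural step (Lemma~\ref{l:interior}) identifies the interior boundary of the thickened sphere with $\qk{n-3}{k-1}$, which is what enables the recursion in $k$. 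If you wish to pursue the signed-set approach, you would at minimum need to define $\KK$ and the greedy rule explicitly and then prove the completeness and spanning statements you have only flagged.
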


To prove Theorem~\ref{t:main}, we need to construct a suitable
triangulation of the sphere $S^{n-2k}$. We first review some
topological preliminaries (Section~\ref{sec:top}) and explore
combinatorial relations among the vertices of Schrijver graphs
(Section~\ref{sec:combin}).

In Section~\ref{sec:construction}, the required properties of the
sought triangulation of $S^{n-2k}$ are formulated in
Theorem~\ref{t:emb}, which is then proved by giving an explicit
recursive construction. Theorem~\ref{t:main} is derived at the end of
Section~\ref{sec:construction}.

In Section~\ref{sec:properties}, two open problems are given to
conclude the paper. In particular, we conjecture that the graph
$QG(n,k)$ of Theorem~\ref{t:main} is edge-critical.


\section{Topological preliminaries}
\label{sec:top}

In this section, we recall the necessary topological concepts. For a
background on topological methods in combinatorics, we refer the
reader to Matou\v{s}ek~\cite{Mat03}. For an introduction to algebraic
topology, consult Hatcher~\cite{Hat02} or Munkres~\cite{Mun84}.

A \emph{simplicial complex} $\CC$ with vertex set $V$ is a hereditary
set system on $V$; the elements of this set system are the
\emph{faces} of $\CC$. A \emph{geometric simplicial complex} $\KK$ in
$\RR^d$ is obtained if we associate each vertex in $V$ with a point in
$\RR^d$ in such a way that
\begin{enumerate}[label=(\arabic*)]
\item the set of points $P_\sigma$ associated with each face $\sigma$
  is in convex position, and
\item for distinct faces $\sigma$ and $\tau$, the relative interiors
  of the convex hulls of $P_\sigma$ and $P_\tau$ are disjoint.
\end{enumerate}
The convex hulls of the sets $P_\sigma$, where $\sigma$ is a face of
the underlying simplicial complex $\CC$, will be referred to as the
\emph{faces} of $\KK$. Since we will be dealing exclusively with
geometric simplicial complexes in this paper, we will often drop the
adjectives `geometric' and `simplicial'.

A face such as $\Setx{a,b,c}$ is also written as $abc$. Two vertices
$v,w$ of $\KK$ are \emph{adjacent} if $vw$ is a face of $\KK$. The
\emph{dimension} of a face $\sigma$ is $\size{\sigma}-1$. Faces of
dimension one are called \emph{edges}. The vertex set of a geometric
simplicial complex $\KK$ will be referred to as $V(\KK)$.

The \emph{space} $\|\KK\|$ of a geometric simplicial complex $\KK$ in
$\RR^d$ is the subspace of $\RR^d$ obtained as the union of all faces
of $\KK$. If a space $X\subseteq\RR^d$ is homeomorphic to $\|\KK\|$,
we say that $\KK$ \emph{triangulates} $X$.

The \emph{induced subcomplex} of $\KK$ on a set $X\subseteq V(\KK)$,
denoted by $\KK[X]$, has vertex set $X$ and its faces are all the
faces of $\KK$ contained in $X$.

A \emph{$2$-coloured complex} $\KK$ in $\RR^d$ is a geometric
simplicial complex in $\RR^d$, with each vertex coloured black or
white. For any point $p\in\RR^d$, its \emph{antipode} is the point
$-p$. The complex $\KK$ is \emph{antisymmetric} if the antipode $-v$
of every vertex $v$ is also a vertex of $\KK$, and the colours of $v$
and $-v$ are different. 

Suppose that $\KK$ triangulates the ball $B^d$. The \emph{boundary} of
$\KK$ is the subcomplex triangulating the boundary sphere $S^{d-1}
= \partial B^d$. We will say that $\KK$ is
\emph{boundary-antisymmetric} if its boundary is antisymmetric.

Let us recall the definition of deformation retraction (as given
in~\cite{Hat02}). Given a subspace $A$ of a topological space $X$, a
family of continuous maps $f_t:\,X\to X$ (where $t\in[0,1]$) is a
\emph{deformation retraction} of $X$ onto $A$ if $f_0$ is the
identity, so is the restriction of each $f_t$ to $A$, the image of
$f_1$ is $A$, and the family is continuous when viewed as a map from
$X\times[0,1] \to X$. If such a deformation retraction exists, $A$ is
said to be a \emph{deformation retract} of $X$.

Next, let $\KK$ be a 2-coloured geometric complex whose space is a
deformation retract of the thickened sphere $S^d\times I$ in
$\RR^{d+1}$, where $d\geq 1$, $S^d$ is the unit $d$-sphere and $I$ is
a short interval in $\RR$. Thus, we can define the \emph{interior} of
$\KK$ as the bounded component of $\RR^{d+1}\sm\|\KK\|$, and similarly
for the \emph{exterior} of $\KK$. Note that the origin of $\RR^{d+1}$
is contained in the interior. We define the \emph{interior boundary}
of $\KK$, $\IB(\KK)$, as the subcomplex of $\KK$ induced on the set of
vertices contained in the closure of the interior of $\KK$. The
\emph{exterior boundary} $\EB(\KK)$ is defined analogously. Note that
$\IB(\KK)$ and $\EB(\KK)$ need not be disjoint.

In the above setting, we will utilise the operation of adding the
\emph{clone} of a vertex. For a vertex $v$ of $\IB(\KK)$, we add a
vertex $v^*$ of the same colour (the \emph{clone} of $v$) and embed it
in the open segment from $v$ to the origin, very close to
$v$. Furthermore, for each face $\sigma$ of $\IB(\KK)$ containing $v$,
we add a face $\sigma\cup\Setx{v^*}$. Thus, $v^*$ replaces $v$ in the
interior boundary of the resulting complex $\KK'$. Note also that
$vv^*$ is a face of $\KK'$.

Let $K$ be a $2$-coloured complex and $u,v$ two adjacent vertices of
$\KK$ of the same colour. The \emph{contraction} of the edge $uv$ is
the operation replacing each face $\sigma$ of $\KK$ with
$\sigma\sm\Setx{u,v}\cup\Setx w$, where $w$ is a new vertex (assigned
the colour of $u$ and $v$). Geometrically, it corresponds to shrinking
the segment $uv$ to a point. By definition, the operation does not
introduce multiple copies of any face. For example, if $\KK$ is the
complex whose maximal faces are $xu$, $xv$ and $uvy$, where $u$ and
$v$ are black and $x$ and $y$ are white, then the contraction of $uv$
produces the complex with maximal faces $xw$ and $wy$.

Let $\KK$ and $\LL$ be $2$-coloured complexes. A mapping $f:\,V(\KK)\to
V(\LL)$ is a \emph{homomorphism} (of $2$-coloured complexes) from
$\KK$ to $\LL$ if $f$ preserves vertex colours and for any face
$\sigma$ of $\KK$, its image $f[\sigma]$ is a face of $\LL$. (We
stress that $f[\sigma]$ is a set, without repeated elements.)

A homomorphism $f$ from $\KK$ to $\LL$ is an \emph{isomorphism} if $f$
is a bijection and $f^{-1}$ is a homomorphism.

For an antisymmetric $2$-coloured complex $\KK$ triangulating a
sphere, we define its \emph{associated graph} $G(\KK)$ as the graph
with vertex set $V(\KK)$ and with the edge set consisting of all
edges of $\KK$ with one end black and the other white.


\section{Combinatorial preliminaries}
\label{sec:combin}

Before we present the construction proving Theorem~\ref{t:main}, we
need to do some preparatory work. In this section, we introduce some
terminology and notation that is useful for the classification of the
vertices of the Schrijver graph $SG(n,k)$. 

Let $k\geq 1$ and $n\geq 2k+1$. We let $C_n$ be the $n$-circuit on the
vertex set $[n] = \Setx{1,\dots,n}$ and let $\subs n k$ be the set of
all independent subsets of $[n]$ of size $k$. Addition and subtraction
on $[n]$ are defined `with wrap-around': for instance, if $i,j\in [n]$
and $(i-1) + (j-1) \equiv \ell-1\pmod n$, where $\ell\in[n]$, then
$i+j$ is defined as $\ell$. We let $\V n k$ be the set of all subsets
of $[n]$ of size $k$ that are independent sets in $C_n$. Note that
$\V{n-1}k$ is a subset of $\V n k$.

The \emph{core} of a set $A\in\V n k$ is the set
\begin{equation*}
  \core A =
  \begin{cases}
    A\sm\Setx 1 & \text{if $1\in A$,}\\
    A\sm\Setx{\max(A)} & \text{otherwise.}
  \end{cases}
\end{equation*}
Thus, $\core{\Setx{1,3,5}} = \Setx{3,5}$, while $\core{\Setx{2,4,6}} =
\Setx{2,4}$.

\begin{observation}\label{obs:extreme}
  For $A\in\V n k$,
  \begin{equation*}
    \core A\cap\Setx{1,n} = \emptyset.
  \end{equation*}
\end{observation}

Let $0 \leq i \leq n/2$. We define the set $\Lambda_{n,i}\subseteq [n]$
as follows:
\begin{equation*}
  \Lambda_{n,i} =
  \begin{cases}
    \Setx{2,4,\dots,i-1}\cup\Setx{n-i+1,n-i+3,\dots,n} & \text{if $i$ is odd,}\\
    \Setx{1,3,\dots,i-1}\cup\Setx{n-i+1,n-i+3,\dots,n-1} & \text{if $i$
      is even.}
  \end{cases}
\end{equation*}

For small $i$, the sets $\Lambda_{n,i}$ are given in the following table:
\begin{center}
  \begin{tabular}{c|c|c|c|c}
    $\Lambda_{n,0}$ & $\Lambda_{n,1}$ & $\Lambda_{n,2}$ & $\Lambda_{n,3}$ & $\Lambda_{n,4}$\\\hline
    $\emptyset$ & $\Setx n$ & $\Setx{1,n-1}$ & $\Setx{2,n-2,n}$ & $\Setx{1,3,n-3,n-1}$
  \end{tabular}
\end{center}
Note that for each $i$, $\Lambda_{n,i}\in\V n i$. 

The $n$-\emph{level} of a set $A\in \V n k$, $\level n A$, is the
maximum $i$ such that $\Lambda_{n,i} \subseteq A$. Note that $0 \leq
\level n A\leq k$. For $0\leq i\leq k$, we define
\begin{equation*}
  \V[i] n k = \Set{A\in \V n k}{\level n A = i}.
\end{equation*}
Furthermore, we let $\V[+] n k$ be the union of all $\V[i] n k$
with $i\geq 1$.

\begin{lemma}\label{l:level}
  We have
  \begin{equation*}
    \V[0] n k = \V{n-1} k.
  \end{equation*}
\end{lemma}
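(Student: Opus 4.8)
The plan is to prove the two inclusions separately, using the definition of the $n$-level and Observation~\ref{obs:extreme}. Recall that $\V{n-1}k$ consists of those $k$-subsets of $[n-1]$ that are independent in $C_{n-1}$, viewed as a subset of $\V n k$. The key observation is that $\Lambda_{n,1} = \Setx n$, so $\level n A \geq 1$ if and only if $n \in A$, and hence $\level n A = 0$ if and only if $n \notin A$. Thus the content of the lemma is precisely that, for a $k$-subset $A$ of $[n]$ that is independent in $C_n$, we have $n \notin A$ if and only if $A \subseteq [n-1]$ and $A$ is independent in $C_{n-1}$.

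First I would prove $\V[0]nk \subseteq \V{n-1}k$. Suppose $A \in \V[0]nk$, so $n \notin A$ and $A$ is independent in $C_n$. Then $A \subseteq [n-1]$. The only edge of $C_n$ not present in $C_{n-1}$ that could matter is… actually $C_{n-1}$ has the edge $\Setx{1,n-1}$ which $C_n$ does not have, so we must check that $1$ and $n-1$ are not both in $A$. Here is where the ``core'' machinery pays off, but more directly: I claim $n-1 \notin A$ whenever $1 \in A$. Indeed, if $1 \in A$ and $n \notin A$, consider whether $n-1 \in A$; if so, then since $A$ is independent in $C_n$ we would need... hmm, $\Setx{1,n-1}$ is not an edge of $C_n$, so independence in $C_n$ alone does not forbid it. So the real claim must be that $\level n A \geq 1$ already forces something. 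Let me reconsider: if $1, n-1 \in A$ and $n \notin A$, I should check whether $\Lambda_{n,1} = \Setx n \subseteq A$ — it is not, so $\level n A$ could still be $0$, and such an $A$ would be a counterexample unless it fails independence. But $\Setx{1,3,5}$ with $n = 5$: here $n - 1 = 4 \notin A$. Try $n = 6$, $A = \Setx{1,5}$ (so $k=2$): independent in $C_6$? Edges are $12,23,34,45,56,61$; neither $15$ nor an incident pair, so yes independent, and $6 \notin A$, so $\level 6 A = 0$, yet $A$ is \emph{not} independent in $C_5$ since $45$... no wait $4 \notin A$; the edge $\Setx{1,5} = \Setx{1,n-1}$ \emph{is} an edge of $C_5$. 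So $A \notin \V 5 2$, contradicting the claimed inclusion! So either I am misreading the definition of $\V{n-1}k$ as a subset of $\V n k$, or $\Lambda$ is designed precisely so that $\level n A \geq 1$ in this case. Let me recompute $\level 6 A$ for $A = \Setx{1,5}$: $\Lambda_{6,2} = \Setx{1,5}$, which \emph{is} contained in $A$, so $\level 6 A = 2 \neq 0$. Good — so my error was forgetting that $\Lambda_{n,2} = \Setx{1,n-1}$, and this example is exactly why the $\Lambda_{n,i}$ are defined the way they are.

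So the correct proof runs as follows. For the inclusion $\V[0]nk \subseteq \V{n-1}k$: take $A \in \V[0]nk$. Since $\Lambda_{n,1} = \Setx n \not\subseteq A$ we get $n \notin A$, hence $A \subseteq [n-1]$. To see $A$ is independent in $C_{n-1}$, the only new edge is $\Setx{1,n-1}$; if both $1$ and $n-1$ lay in $A$, then $\Lambda_{n,2} = \Setx{1,n-1} \subseteq A$, forcing $\level n A \geq 2 > 0$, a contradiction. Hence $A \in \V{n-1}k$. Conversely, for $\V{n-1}k \subseteq \V[0]nk$: take $A \in \V{n-1}k$, so $A \subseteq [n-1]$ is independent in $C_{n-1}$, hence also independent in $C_n$ (fewer edges), so $A \in \V n k$. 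Since $n \notin A$ we have $\Lambda_{n,1} \not\subseteq A$, so $\level n A = 0$ and $A \in \V[0]nk$.

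The main obstacle — really the only subtlety — is verifying that independence in $C_{n-1}$ is equivalent to ``independence in $C_n$ plus $n \notin A$ plus not containing $\Setx{1,n-1}$,'' and in turn that ``contains $\Setx{1,n-1}$'' is captured by $\level n A \geq 2$ via $\Lambda_{n,2}$. I would state this cleanly as: for $A \in \V n k$ with $n \notin A$, one has $\level n A = 0$ iff $\Setx{1,n-1}\not\subseteq A$ iff $A$ is independent in $C_{n-1}$; the first equivalence is immediate from the explicit values of $\Lambda_{n,1}$ and $\Lambda_{n,2}$ together with the monotonicity $\Lambda_{n,1}\subseteq\Lambda_{n,2}$, and the second is the elementary edge count. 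Everything else is routine set containment, so I would keep the write-up to a short paragraph.
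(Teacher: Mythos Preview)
Your approach is the same as the paper's --- reduce $\level n A = 0$ to ``$n\notin A$ and $\Setx{1,n-1}\not\subseteq A$'' and identify this with independence in $C_{n-1}$ --- but the execution has two slips.

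First, in your reverse inclusion you write ``Since $n \notin A$ we have $\Lambda_{n,1} \not\subseteq A$, so $\level n A = 0$.'' That inference is exactly the one your own counterexample $A=\Setx{1,5}$, $n=6$ refuted a paragraph earlier: $\Lambda_{n,1}\not\subseteq A$ does not by itself give $\level n A = 0$, because the sets $\Lambda_{n,i}$ are \emph{not} a chain. You must also use that $A$ is independent in $C_{n-1}$, hence $\Setx{1,n-1}\not\subseteq A$, hence $\Lambda_{n,2}\not\subseteq A$, and then argue that no higher $\Lambda_{n,i}$ is contained in $A$ either.

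Second, the justification you offer in the last paragraph --- ``the monotonicity $\Lambda_{n,1}\subseteq\Lambda_{n,2}$'' --- is simply false: $\Lambda_{n,1}=\Setx n$ and $\Lambda_{n,2}=\Setx{1,n-1}$ are disjoint. The correct nesting, which both the paper and you implicitly need, is along parities: $\Lambda_{n,1}\subseteq\Lambda_{n,i}$ for every odd $i\ge 1$ (all contain $n$) and $\Lambda_{n,2}\subseteq\Lambda_{n,i}$ for every even $i\ge 2$ (all contain $1$ and $n-1$). This is what makes ``$\level n A = 0$ iff $A$ contains neither $\Setx n$ nor $\Setx{1,n-1}$'' true, and with it your argument goes through and matches the paper's.
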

\begin{proof}
  We need to show that for any set $A\in\V n k$, we have $\level n A
  = 0$ if and only if $A\in\V{n-1}k$. By definition, $\level n A = 0$
  if and only if $A$ contains neither $\Setx n$ nor $\Setx{1,n-1}$ as
  a subset. In turn, this holds if and only if $A\in \V{n-1} k$.
\end{proof}

Let $B\in \V{n-1} k$. We define the set $\clone n B \in\V n k$ by
\begin{equation*}
  \clone n B = \core B \cup\Setx{n}.
\end{equation*}
By Observation~\ref{obs:extreme}, the operation is well-defined.
Since it will be used in relation with adding the `clone' of a vertex
labelled by $B$, we might call $\clone n B$ the \emph{$n$-clone} of
$B$.

The following lemma will be useful:
\begin{lemma}\label{l:clone-clone}
  For $2k+1 \leq i < m$ and $A\in\V{i-1}k$, $\clone m {(\clone i A)} =
  \clone m A$.
\end{lemma}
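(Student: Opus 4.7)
The plan is to unfold both sides of the claimed equality using the definition $\clone n B = \core B \cup \Setx n$ and to show that each side reduces to $\core A \cup \Setx m$. The right-hand side is immediate: $\clone m A = \core A \cup \Setx m$ by definition. For the left-hand side, set $B = \clone i A = \core A \cup \Setx i$; then $\clone m B = \core B \cup \Setx m$, so the whole lemma comes down to verifying the single identity $\core B = \core A$.

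To prove $\core B = \core A$, I would establish two small claims. First, $1 \notin B$: by Observation~\ref{obs:extreme} applied to $A \in \V{i-1}k$, one has $\core A \cap \Setx{1, i-1} = \emptyset$, and the assumption $i \geq 2k+1 \geq 3$ gives $i \neq 1$, so neither summand of $B = \core A \cup \Setx i$ contains~$1$. The defining clause for the core therefore forces $\core B = B \sm \Setx{\max B}$. Second, $\max B = i$: since $A \subseteq [i-1]$, also $\core A \subseteq [i-1]$, so $i$ is the unique element of $B$ lying outside $[i-1]$. Combining the two gives
\begin{equation*}
  \core B \;=\; (\core A \cup \Setx i) \sm \Setx i \;=\; \core A,
\end{equation*}
the last equality using that $i \notin \core A$.

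There is no serious obstacle here; the argument is essentially bookkeeping. The hypothesis $i \geq 2k+1$ is used both to make $\V{i-1}k$ nonempty and to guarantee $i \neq 1$, while $i < m$ ensures that both $\clone i A$ and $A$ lie in $\V{m-1}k$ (so that the outer $\clone m{\cdot}$ is applicable on both sides): the inclusion $[i] \subseteq [m-1]$ together with independence in $C_i$ implies independence in $C_{m-1}$, as any edge of $C_{m-1}$ meeting $[i]$ is already an edge of $C_i$.
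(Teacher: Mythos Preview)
Your argument is correct and is essentially the paper's proof spelled out in full detail: both reduce the identity to $\core{(\clone i A)} = \core A$, established via $1\notin\core A$ and $\max(\clone i A)=i$. One small slip in your final paragraph: the claim ``any edge of $C_{m-1}$ meeting $[i]$ is already an edge of $C_i$'' is false as stated (consider $\{i,i+1\}$ when $i<m-1$), but what you actually need---and what holds---is that every edge of $C_{m-1}$ \emph{contained in} $[i]$ is an edge of $C_i$.
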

\begin{proof}
  By definition, $\clone i A = \core A\cup\Setx i$. Since
  $1\notin \core A$, $\core{\clone i A} = \core A$. Thus, $\clone m
  {(\clone i A)} = \clone m A$.
\end{proof}

For a set $A\in\V n k$ such that $1\notin A$, we define $A-1$ as the
set obtained by subtracting $1$ from each element of $A$ (and
similarly for $A+1$).

Let us define a mapping $f$ from $\V n k$ to $\V{n-2}{k-1}$, and a
mapping $g_n$ in the inverse direction. Let $X \in \V n k$ and $Y\in
\V{n-2}{k-1}$. The mappings are as follows:
\begin{align*}
  f(X) &= \core X - 1,\\
  g_n(Y) &=
  \begin{cases}
    (Y + 1) \cup \Setx 1 & \text{if $n-2\in Y$,}\\
    (Y + 1) \cup \Setx n & \text{otherwise.}     
  \end{cases}
\end{align*}

\begin{lemma}\label{l:bijection}
  The restriction of $f$ to $\V[+] n k$ is a bijection
  \begin{equation*}
    f:\,\V[+] n k \to \V{n-2}{k-1},
  \end{equation*}
  and $g_n$ is its inverse. Furthermore, $f$ maps disjoint pairs of
  sets to disjoint pairs.
\end{lemma}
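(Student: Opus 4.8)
The plan is to verify directly that $f$ and $g_n$ are well defined on the stated domains, that they are mutually inverse, and that $f$ preserves disjointness; the one structural fact that makes everything clean is a dichotomy for the sets in $\V[+] n k$, which I would establish first. The claim is that $A\in\V[+] n k$ if and only if either $n\in A$, or $\{1,n-1\}\subseteq A$, and that these two cases are mutually exclusive (since $1$ and $n$ are adjacent in $C_n$). Indeed, $\level n A\geq 1$ means $\Lambda_{n,i}\subseteq A$ for some $i\geq 1$; if $i$ is odd then $n\in\Lambda_{n,i}\subseteq A$, while if $i$ is even then, directly from the definition, $\{1,n-1\}=\Lambda_{n,2}\subseteq\Lambda_{n,i}\subseteq A$. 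Conversely $n\in A$ gives $\Lambda_{n,1}\subseteq A$, and $\{1,n-1\}\subseteq A$ gives $\Lambda_{n,2}\subseteq A$. In the first case, independence forces $1,n-1\notin A$, hence $\core A = A\sm\{n\}\subseteq\{2,\dots,n-2\}$; in the second case $2,n\notin A$, hence $\core A = A\sm\{1\}\subseteq\{3,\dots,n-1\}$ with $n-1\in\core A$.

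From here the argument is bookkeeping. For $f$: by the displayed containments, $\core A-1$ is a $(k-1)$-subset of $\{1,\dots,n-3\}$ in the first case and of $\{2,\dots,n-2\}$ in the second; in either case it inherits independence in the path $1,2,\dots,n-2$ from $A$ and avoids at least one end of $C_{n-2}$ (namely $n-2$, resp.\ $1$), so it lies in $\V{n-2}{k-1}$. For $g_n$: one checks that $g_n(Y)$ is a $k$-subset of $[n]$ which is independent in $C_n$ (the added element, $1$ or $n$, is nonadjacent in $C_n$ to every element of $Y+1$, using the case hypothesis and the independence of $Y$ in $C_{n-2}$), and which contains $n$ if $n-2\notin Y$ and $\{1,n-1\}$ if $n-2\in Y$; in both cases it lies in $\V[+] n k$ by the dichotomy. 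Next, $f(g_n(Y))=Y$: computing $\core(g_n(Y))$ in the two cases gives $Y+1$ both times, and subtracting $1$ returns $Y$. And $g_n(f(A))=A$: using the dichotomy, $n-2\in f(A)$ precisely in the second case (equivalently, precisely when $n-1\in A$), which is exactly the case in which $g_n$ re-attaches $1$ rather than $n$; in each case $g_n$ undoes the removal and the shift, returning $A$.

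Finally, disjointness comes essentially for free: $\core A\subseteq A$ always, so $A_1\cap A_2=\emptyset$ implies $\core{A_1}\cap\core{A_2}=\emptyset$, and since subtracting $1$ is injective on $\{2,\dots,n-1\}$ — which contains every core, by Observation~\ref{obs:extreme} — we get $f(A_1)\cap f(A_2)=\emptyset$. I expect the only mildly delicate point to be the independence bookkeeping when passing between $C_n$ and $C_{n-2}$: tracking which of the endpoints $1,n-2$ can occur in $\core A-1$, and which of $1,n-1,n$ can occur in $g_n(Y)$. The dichotomy above is precisely what organises this, so I would prove it at the outset and then proceed case by case.
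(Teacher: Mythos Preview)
Your proposal is correct and follows essentially the same approach as the paper: both argue that $g_n$ lands in $\V[+] n k$, that the identities $f(g_n(Y))=Y$ and $g_n(f(A))=A$ hold, and that disjointness is preserved because $\core A\subseteq A$ and the shift by $-1$ is injective. The paper merely asserts these as ``easily verified'', whereas you spell out the details via the useful dichotomy $A\in\V[+] n k \Leftrightarrow n\in A$ or $\{1,n-1\}\subseteq A$ (which is exactly the content of the proof of Lemma~\ref{l:level}); this is elaboration, not a different route.
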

\begin{proof}
  The first assertion follows from the fact that the image of $g_n$ is
  contained in $\V[+] n k$, and from the easily verified
  equalities
  \begin{equation*}
    f(g_n(Y)) = Y \text{\qquad and \qquad} g_n(f(X)) = X
  \end{equation*}
  for $X\in\V[+] n k$, $Y\in\V{n-2}{k-1}$.

  The assertion that the images of disjoint sets under $f$ are
  disjoint follows directly from the definition of $f$.
\end{proof}

\begin{corollary}\label{cor:cores}
  All the sets in $\V[+] n k$ have distinct cores.
\end{corollary}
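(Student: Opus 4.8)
The plan is to deduce Corollary~\ref{cor:cores} directly from Lemma~\ref{l:bijection}. First I would observe that by Lemma~\ref{l:bijection}, the map $f$ restricted to $\V[+] n k$ is a bijection onto $\V{n-2}{k-1}$; in particular it is injective. Since $f(X) = \core X - 1$, and the operation $A \mapsto A - 1$ (subtracting $1$ from each element, valid whenever $1 \notin A$, which holds for cores by Observation~\ref{obs:extreme}) is itself injective, the injectivity of $f$ forces the cores of distinct sets in $\V[+] n k$ to be distinct.

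To spell this out: suppose $A, B \in \V[+] n k$ with $\core A = \core B$. Then $\core A - 1 = \core B - 1$, i.e.\ $f(A) = f(B)$, so by injectivity of $f$ on $\V[+] n k$ we get $A = B$. Hence the map $A \mapsto \core A$ is injective on $\V[+] n k$, which is exactly the assertion that all sets in $\V[+] n k$ have distinct cores.

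There is really no obstacle here: the corollary is an immediate consequence of the injectivity half of Lemma~\ref{l:bijection}, the only minor point being that one should note the well-definedness of the shift $A \mapsto A-1$ on cores, which is guaranteed by Observation~\ref{obs:extreme} (cores avoid $\{1,n\}$). The statement is essentially a restatement of ``$f$ is injective on $\V[+] n k$'' modulo the trivial injective shift, so the proof is a one-line observation.
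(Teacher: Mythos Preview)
Your proof is correct and matches the paper's intent: the corollary is stated immediately after Lemma~\ref{l:bijection} with no separate proof, precisely because it follows from the injectivity of $f$ on $\V[+] n k$ together with the trivial injectivity of the shift $A\mapsto A-1$. Your explicit mention of Observation~\ref{obs:extreme} to justify the shift is a nice touch that makes the one-line deduction fully rigorous.
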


\begin{observation}
  For any set $B\in \V{n-1}k$, we have $f(\clone n B) = f(B)$.
  Thus, the suitable restriction of $f$ is a bijection
  \begin{equation*}
    \Set{\clone n B}{B\in\V[+]{n-1}k} \to \V{n-3}{k-1}.
  \end{equation*}
\end{observation}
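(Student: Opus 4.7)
The plan is to verify the identity $f(\clone n B) = f(B)$ by unfolding the definitions, and then to deduce the bijection statement by combining this identity with Lemma~\ref{l:bijection} and Corollary~\ref{cor:cores}.

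For the identity, I would start from $\clone n B = \core B \cup \Setx n$ and determine which branch of the definition of core applies to the right-hand side. Applying Observation~\ref{obs:extreme} inside $C_{n-1}$ to $B\in\V{n-1}k$ gives $1\notin\core B$, so $1\notin\clone n B$, and the ``otherwise'' branch applies. Since $\core B\subseteq[n-1]$, the maximum of $\clone n B$ is $n$, so removing it yields $\core{\clone n B}=\core B$. Subtracting $1$ then gives $f(\clone n B)=\core B-1=f(B)$, as claimed.

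For the bijection statement, Lemma~\ref{l:bijection} applied with $n$ replaced by $n-1$ shows that $f$ restricts to a bijection $\V[+]{n-1}k\to\V{n-3}{k-1}$. I would next check that $B\mapsto\clone n B$ is injective on $\V[+]{n-1}k$: if $\clone n {B_1}=\clone n {B_2}$, then $\core{B_1}=\core{B_2}$, and Corollary~\ref{cor:cores} forces $B_1=B_2$. Since $f(\clone n B)=f(B)$, the composition $B\mapsto\clone n B\mapsto f(\clone n B)$ agrees with $f$ on $\V[+]{n-1}k$ and is therefore a bijection onto $\V{n-3}{k-1}$. Because $B\mapsto\clone n B$ is itself a bijection onto its image $\Set{\clone n B}{B\in\V[+]{n-1}k}$, the second factor (that is, $f$ restricted to this image) must also be a bijection onto $\V{n-3}{k-1}$.

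There is no genuine obstacle; the only care required is bookkeeping about which circuit $C_m$ one is working in when invoking Observation~\ref{obs:extreme}, Lemma~\ref{l:bijection}, and Corollary~\ref{cor:cores} (here each is applied on $[n-1]$ rather than $[n]$).
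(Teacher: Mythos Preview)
Your proof is correct. The paper itself records this statement as an Observation and gives no proof at all, so there is nothing to compare against; your argument simply fills in the details the authors regard as routine, unwinding the definitions exactly as intended and appealing to Observation~\ref{obs:extreme}, Lemma~\ref{l:bijection}, and Corollary~\ref{cor:cores} (each instantiated with $n-1$ in place of $n$) in the natural way.
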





To avoid ambiguity in our construction, we will need to fix a suitable
total order on each set $\V n k$. It will be convenient to simply use
the lexicographical ordering: for $A,B\in\V n k$, let $A'$ and $B'$
be the sequences obtained by listing the elements of $A$ and $B$
(respectively) in the increasing manner, and define $A\lex B$ if $A'$
precedes $B'$ in the standard lexicographical ordering. 

Finally, we define a set $A\in\V n k$ to be \emph{singular} if
$A\in\V{2k}k$. Thus, the singular sets in $\V 7 3$ are $\Setx{1,3,5}$
and $\Setx{2,4,6}$.


\section{Constructing the embedding}
\label{sec:construction}

In this section, we shall construct the antisymmetric $2$-coloured
complex $\qk n k$ in $\RR^{n-2k+1}$ triangulating the sphere
$S^{n-2k}$. The vertices will be coloured black and white; both the
black vertices and the white vertices will be labelled bijectively
with elements of $\V n k$. We will identify each vertex with its label
and speak, for instance, of the black copy of $\Setx{1,3,5}$ or the
white copy of $\Setx{2,6,8}$. For a set $A\in\V n k$, its black copy
will be denoted by $\black A$ and its white copy by $\white A$.

\begin{theorem}\label{t:emb}
  For any $k\geq 1$ and $n\geq 2k+1$, there is a 2-coloured geometric
  complex $\qk n k$ in $\RR^{n-2k+1}$ with the following properties:
  \begin{enumerate}[label=(\roman*)]
  \item $\qk n k$ is an antisymmetric triangulation of the sphere
    $S^{n-2k}$ such that no face contains a pair of antipodal
    vertices.
  \item $\qk n k$ contains no monochromatic maximal faces.
  \item The associated graph of $\qk n k$ is a spanning subgraph of
    $SG(n,k)$.
  \item For $n > 2k+1$, $\qk n k$ contains $\qk{n-1}k$ as an
    antisymmetric subcomplex.
  \end{enumerate}
\end{theorem}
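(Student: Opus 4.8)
The plan is to construct the complex $\qk n k$ by recursion on $n$, with the base case $n = 2k+1$ handled directly and the recursive step passing from $\qk{n-1}k$ (a triangulation of $S^{n-2k-1}$, living one dimension lower) to $\qk n k$ (a triangulation of $S^{n-2k}$). The natural engine for the dimension-raising step is the machinery of thickened spheres, interior/exterior boundaries, cloning and edge-contraction introduced in Section~\ref{sec:top}: starting from $\qk{n-1}k$ embedded as the "equator" $S^{n-2k-1}\subseteq S^{n-2k}$, one thickens it, clones the vertices on the interior boundary, and then caps off the two hemispheres. The labels of the newly introduced vertices are dictated by the combinatorics of Section~\ref{sec:combin}: Lemma~\ref{l:level} tells us that $\V[0]nk = \V{n-1}k$ already sits inside $\qk{n-1}k$, the vertices in $\V[+]nk$ are the ones we must add, and the clone operation $\clone n B = \core B\cup\Setx n$ tells us precisely which new label to attach to the clone of the vertex labelled $B$. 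Lemma~\ref{l:clone-clone} guarantees that iterating clones behaves consistently, and Lemma~\ref{l:bijection} with Corollary~\ref{cor:cores} ensures the labelling of the added vertices is a genuine bijection onto $\V[+]nk$ compatible with disjointness, which is exactly what property~(iii) needs.

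**Next I would** verify the four properties one at a time, maintaining all of them as invariants of the recursion. Property~(iv) is essentially built into the construction: the equatorial copy of $\qk{n-1}k$ is left untouched as a subcomplex, and one checks it is antisymmetric inside $\qk n k$ because antipodality is inherited from the ambient sphere and the colour-flip is preserved (the clones take the colour of their originals, and antipodal vertices keep opposite colours). Property~(i) -- antisymmetric triangulation of $S^{n-2k}$ with no face containing an antipodal pair -- follows because capping the two hemispheres of a thickened antisymmetric $(n-2k-1)$-sphere yields an antisymmetric $(n-2k)$-sphere, provided the capping is done antipodally-consistently on the two sides; the "no antipodal pair in a face" condition must be checked for the new faces, which is where the geometry of placing clones very close to their originals (hence far from the antipodes) does the work. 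Property~(ii), no monochromatic maximal face, is inherited on the equator and must be re-established for the new maximal faces created by cloning and capping; here the face $vv^*$ being present (noted in Section~\ref{sec:top}) and the careful choice of which vertices get cloned should force every maximal face to meet both colour classes. Property~(iii), that the associated graph is a spanning subgraph of $SG(n,k)$, reduces to: every bicoloured edge $\white A\white B$ or $\black A\black B$ (with opposite colours) of $\qk n k$ joins labels that are disjoint as $k$-subsets and that both lie in $\subs n k$ (i.e.\ are independent in $C_n$) -- the independence being the defining property of $\V n k$, and the disjointness of edge-endpoints being exactly the content preserved by $f$ in Lemma~\ref{l:bijection}.

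**The main obstacle** I anticipate is the base case together with making the recursive capping step fully explicit and checking that it does not create a monochromatic maximal face or an antipodal face. The base case $n = 2k+1$ requires exhibiting an antisymmetric $2$-coloured triangulation of $S^0$ (two points!) -- so actually the genuinely substantive base case is likely $n = 2k+2$ or the construction is seeded differently, and one must be careful about where the recursion bottoms out given that $SG(2k+1,k)$ is the "hardest" Schrijver graph. More seriously, the recursive step as sketched is a clean idea but the bookkeeping -- which vertices to clone, in what order (this is why the lexicographic order $\lex$ was fixed in Section~\ref{sec:combin}), how the two hemispherical caps are triangulated, and how the "singular" sets $A\in\V{2k}k$ behave under the recursion -- is where all the real work lies, and where an off-by-one error in the level function or the clone map would break property~(iii). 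I would therefore spend most of the effort writing down the capping explicitly (probably as a cone over $\IB$ and $\EB$ after cloning, with the apex at a suitably chosen point), and then discharging (i)--(iv) as a block by induction, with the disjointness/independence claims for (iii) handed off almost verbatim to Lemmas~\ref{l:level}--\ref{l:bijection}.
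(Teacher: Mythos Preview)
Your overall architecture is right---recursion on $n$, with $\qk{n-1}k$ sitting as the equator, thickened via clones and contractions---but there are two concrete gaps.

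First, a minor one: for $n=2k+1$ we have $n-2k=1$, so the base case is a triangulation of $S^1$, not $S^0$. The paper realises it as a $(4k+2)$-circuit with alternating colours; this is perfectly explicit and not where the difficulty lies.

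Second, and this is the real missing idea: your proposed capping, ``a cone over $\IB$ and $\EB$ after cloning, with the apex at a suitably chosen point'', only works for $k=1$. For $k\geq 2$ the set $\V[+]nk$ of new labels has many elements, so a single-apex cone cannot carry them all, and an arbitrary triangulation of the cap will not give you control over which bichromatic edges appear (hence no control over property~(iii)). The paper's key structural lemma (Lemma~\ref{l:interior}) is that after the cloning and contraction steps, the interior boundary of the thickened complex is isomorphic, as a $2$-coloured complex, to $\qk{n-3}{k-1}$---the isomorphism being exactly $A\mapsto \core A - 1$, i.e.\ the map $f$ of Lemma~\ref{l:bijection}. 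This lets one fill the interior not by a cone but by the recursively-built ball $\qbwhite{n-2}{k-1}$ (note: smaller $k$, colours swapped), whose boundary is $\qk{n-3}{k-1}$. So the construction is a \emph{double} recursion, on $n$ and on $k$ simultaneously; the map $g_n$ is then used to relabel the vertices of $\qbwhite{n-2}{k-1}$ by elements of $\V[+]nk$, and the disjointness-preservation of $f$ in Lemma~\ref{l:bijection} is what pushes property~(iii) through the $k$-recursion. Without this identification of the interior boundary with a lower-$k$ complex, you have no mechanism for producing a cap whose bichromatic edges are guaranteed to join disjoint sets.
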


Let us embark on the construction of $\qk n k$ which eventually proves
Theorem~\ref{t:emb}. In the construction, we will ensure that the
following (more technical) conditions hold as well:
\begin{enumerate}[label=(P\arabic*)]
\item\label{i:close} If $\black A\white B$ is a face of $\qk n k$,
  then $\size{\level n A - \level n B} \leq 1$, and $\level n A =
  \level n B$ only if $\level n A = \level n B = 0$.
\item\label{i:sing} For $k\geq 2$ and a vertex $\black A$ of $\qk n
  k$, $A$ is nonsingular if and only if for any
  $B'\subseteq\Setx{2,\dots,n-1}$, $\black A$ is contained in a face
  not containing any vertex $\white B$ with $\core B = B'$.
\item\label{i:sing-same} For a vertex $\black A$ with $A$ singular and
  an adjacent vertex $\black B$, $\core A = \core B$.
\end{enumerate}

The definition of $\qk n k$ is straightforward in case that $n =
2k+1$. For $j\in[2k+1]$, let
\begin{equation*}
  I_j = \Setx{j, j+2, \dots, j+2k-2} \in \V{2k+1} k.
\end{equation*}
The complex $\qk{2k+1}k$ is $1$-dimensional, so we can describe it as
a graph: it is the circuit of length $2(2k+1)$ with vertices
\begin{equation*}
  \black{I_1}, \white{I_2}, \black{I_3}, \dots, \black{I_{2k+1}},
  \white{I_1}, \black{I_2}, \dots, \white{I_{2k+1}}, \black{I_1} 
\end{equation*}
in this order. See Figure~\ref{fig:qk73} for an illustration.

\begin{figure}
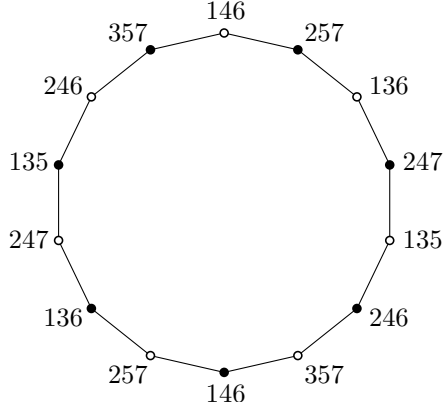

\begin{center}
  \fig1
\end{center}
\caption{The complex $\qk 7 3$. Set brackets are omitted in vertex
  labels such as $\Setx{1,3,5}$.}
\label{fig:qk73}
\end{figure}

Suppose thus that $n > 2k+1$ and that $\qk{n-1}k$ has already been
constructed. Recall that $\qk{n-1}k$ is an antisymmetric triangulation
of $S^{n-2k-1}$ in $\RR^{n-2k}$. A quick summary of the construction
of $\qk n k$ is as follows:
\begin{itemize}
\item we extend $\qk{n-1}k$ to $\qbblackx n k$ (triangulating a
  `thickened sphere' $S^{n-2k-1}\times I$ if $k\geq 2$),
\item we obtain boundary-antisymmetric triangulations $\qbblack n k$
  (and $\qbwhite n k$) of the $(n-2k)$-ball $B^{n-2k}$ by filling in
  the interior of $\qbblackx n k$ using $\qbwhite{n-2}{k-1}$ (in the
  case of $\qbwhite n k$, inverting the colours),
\item we form an antisymmetric triangulation of $S^{n-2k}$ from
  $\qbblack n k$ and $\qbwhite n k$.
\end{itemize}

As the first step of the construction, we extend $\qk{n-1} k$ to a
2-coloured complex $\qbblackx n k$ by adding the `clones' of some of
the vertices, and contracting certain edges.  Both the exterior
boundary $\qk{n-1}k$ of $\qbblackx n k$ and its interior boundary will
be deformation retracts of $\qbblackx n k$. The interior boundary of
$\qbblackx n k$ will be shown to be isomorphic (as a complex) to
$\qk{n-3}{k-1}$, enabling us to fill in the interior by recursion.

There are two special cases where the construction is particularly
simple: $k=1$ and $n=2k+2$. Let us begin with $k=1$. In this case,
$\qbblackx n 1$ is obtained just by taking the cone over $\qk{n-1}1$,
with the newly added apex vertex $\black n$ placed at the origin.

In the case $n=2k+2$, we also construct $\qbblackx n k$ directly from
$\qk{2k+1}k$. For each vertex $\black{I_j}$, where
$j\in[2k+1]\sm\Setx{1,2}$, add its clone $\black{\clone n
  {I_j}}$. Furthermore, add the faces of the following complexes:
\begin{itemize}
\item the join of $\black{\clone n {I_3}}$ with the induced
  subcomplex of $\qk{2k+1}k$ on the set
  $\Setx{\white{I_{2k+1}},\black{I_1},\white{I_2}}$,
\item the join of $\black{\clone n {I_{2k+1}}}$ with the induced
  subcomplex of $\qk{2k+1}k$ on
  $\Setx{\white{I_1},\black{I_2},\white{I_3}}$.
\end{itemize}
See Figure~\ref{fig:qb1base} for an illustration.

\begin{figure}
  \centering
  \fig5\hf\fig2
  \caption{The complexes $\qbblackx41$ (left) and $\qbblackx83$
    (right). Set brackets in vertex labels are omitted. In
    $\qbblackx83$, the clones $\black{(\clone n {I_3})} = \black{358}$
    and $\black{(\clone n {I_7})} = \black{248}$ have been moved
    slightly to produce a more symmetric picture.}
  \label{fig:qb1base}
\end{figure}

To construct $\qbblackx n k$ for $n > 2k+2$ and $k > 1$, we proceed as
follows (the process is illustrated in Figure~\ref{fig:qb1}):
\begin{enumerate}[label=(B\arabic*)]
\item\label{i:black-out} For each vertex $\black A$ with
  $A\in\layer+{n-1}k$, we add its clone $\black{\clone n A}$.
\item\label{i:black-equ} For each vertex $\black A$ with
  $A\in\layer0{n-1}k$, in the order given by $\lex$, we add a
  `temporary' clone denoted by $\black{A_*}$.
\item\label{i:black-contract} For each vertex $\black A$ with
  $A\in\layer0{n-1}k$, we note that the vertex $\black{\clone n
    {(\clone{n-1}A)}}$, added in step~\ref{i:black-out}, is adjacent
  to $\black{A_*}$. We contract the face consisting of these two
  vertices. The resulting vertex retains the label $\black{\clone n
    {(\clone{n-1}A)}}$ (which is the same as $\black{\clone n A}$).
\item\label{i:white} For each vertex $\white B$ with nonsingular
  $B\in\layer0{n-1}k$, in the order given by $\lex$, we add its
  temporary clone $\white{B_*}$. (The fact that $\white B$ is
  contained in the interior boundary follows from
  Lemma~\ref{l:bd}(ii) below.)
\item\label{i:last}%
  \label{i:white-contract}%
  For each vertex $\white B$ with nonsingular $B\in\layer0{n-1}k$, we
  note that $\white{B_*}$ is adjacent to the vertex
  $\white{\clone{n-1}B}$, added in step~\ref{i:white}, and we contract
  the face consisting of these two vertices. The resulting vertex
  retains the label $\white{\clone{n-1}B}$.
\end{enumerate}

\begin{figure}
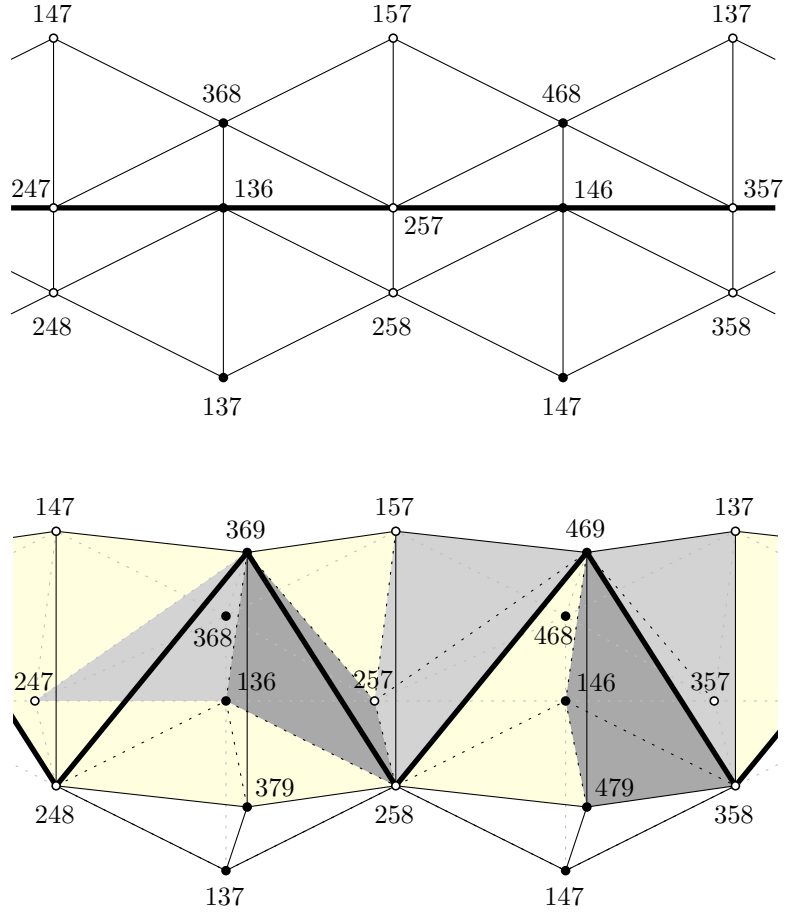

  \centering
  \fig6\\[1cm]
  \fig7
  \caption{The construction of $\qbblackx 9 3$. Above: a portion of
    $\qk 8 3$; the equator $\qk 7 3$ is shown bold. Below: the result
    of steps (B1)--(B6). The original complex $\qk 8 3$ should be
    pictured in a base plane, and the added clones (such as
    $\black{369}$, but not $\white{157}$) above it. Solid and dashed
    lines represent visibility. Some of the $3$-dimensional faces have
    been shaded to make the structure of the complex easier to
    visualise. Faces of the interior boundary of $\qbblack 9 3$ are
    coloured light yellow. The thick line is $\qk 5 2$, the equator of
    $\IB(\qbblackx 9 3)\simeq \qk 6 2$. Notice that it is disjoint
    from the equator of $\qk 8 3$.}
  \label{fig:qb1}
\end{figure}

By switching colours in the above description (for example, adding
clones to white vertices in step \ref{i:black-out}), we obtain the
complex $\qbwhitex n k$.

\begin{lemma}\label{l:bd}
  Let $\KK_{123}$ be the complex resulting from
  steps~\ref{i:black-out}--\ref{i:black-contract} of the above
  construction, and let $A\in\V{n-1}k$. The following properties hold:
  \begin{enumerate}[label=(\roman*)]
  \item $\black A$ is not contained in the interior boundary of
    $\KK_{123}$.
  \item If $A\in\V[0]{n-1}k$ is nonsingular, then $\white B$ is
    contained in the interior boundary of $\KK_{123}$.
  \end{enumerate}
\end{lemma}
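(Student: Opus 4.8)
The plan is to track the interior boundary through the three operations \ref{i:black-out}--\ref{i:black-contract}. Since $\qk{n-1}k$ triangulates a sphere, its space is a deformation retract of the corresponding thickened sphere, and one checks straight from the definitions that, just before step~\ref{i:black-out}, \emph{every} vertex of $\qk{n-1}k$ lies on both the interior and the exterior boundary. Two elementary principles then govern the operations, both immediate from the geometric descriptions (recall that a clone $v^*$ is placed slightly inside $v$, towards the origin). (a)~Cloning a vertex $v$ of the current interior boundary removes $v$ from the interior boundary and installs its clone $v^*$ there instead: each maximal interior-boundary face $\sigma$ through $v$ is replaced by $\sigma\cup\Setx{v^*}$ (which becomes an interior face) and $(\sigma\sm\Setx v)\cup\Setx{v^*}$ (which becomes the interior-boundary face), while the status of every other vertex is unchanged. (b)~Contracting an edge whose ends both lie on the interior boundary identifies those two vertices but keeps every other vertex on the interior boundary, and it never reinstates a vertex already pushed off.

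For~(i): the black vertices of $\qk{n-1}k$ are exactly the $\black A$ with $A\in\V{n-1}k=\V[0]{n-1}k\cup\V[+]{n-1}k$. Step~\ref{i:black-out} clones $\black A$ for every $A\in\V[+]{n-1}k$, so afterwards the only black vertices of $\qk{n-1}k$ left on the interior boundary are those with $A\in\V[0]{n-1}k$; step~\ref{i:black-equ} then clones each of these (which is legitimate precisely because step~\ref{i:black-out} left them on the interior boundary). Step~\ref{i:black-contract} contracts only edges joining two clones (the pair $\black{\clone n {(\clone{n-1}A)}}$, $\black{A_*}$; the first label contains $n$, so it is not an original vertex), so by~(b) it restores no $\black A$. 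Hence no vertex $\black A$ lies on the interior boundary of $\KK_{123}$.

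For~(ii): no white vertex is cloned or moved during \ref{i:black-out}--\ref{i:black-contract}, so by~(a) and~(b) the vertex $\white A$ can only fall off the interior boundary if an edge contraction of step~\ref{i:black-contract} collapses all the maximal interior-boundary faces through it. To rule this out I would exhibit a single maximal face $\tau$ of $\qk{n-1}k$ with $\white A\in\tau$ that contains no \emph{bad pair} $\Setx{\black C,\black{\clone{n-1}C}}$ ($C\in\V[0]{n-1}k$): its black vertices all get cloned in step~\ref{i:black-out} or~\ref{i:black-equ}, so after those steps the image of $\tau$ on the interior boundary consists of $\white A$, the other white vertices of $\tau$, and clones of the black vertices of $\tau$; and since $\tau$ has no bad pair, the contractions of step~\ref{i:black-contract} only relabel clones of this face (turning $\black{A_*}$ into $\black{\clone n {(\clone{n-1}A)}}=\black{\clone n A}$, using Lemma~\ref{l:clone-clone}) rather than collapsing it. Thus $\white A$ remains on a maximal interior-boundary face. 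The only substantive point is the existence of such a $\tau$ when $\level{n-1}A=0$ and $A$ is nonsingular; it is to be read off from properties~\ref{i:close}, \ref{i:sing} and~\ref{i:sing-same} of the already-built complex $\qk{n-1}k$ --- by~\ref{i:sing-same}, a singular label $C$ forces every neighbour of $\black C$ to have core $\core C$, which is exactly what would make bad pairs unavoidable near a singular vertex.

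I expect this existence statement in~(ii) to be the main obstacle: it requires a clear picture of the faces of $\qk{n-1}k$ incident to a level-$0$ white vertex, and its verification should come down to the level-and-core calculus of Section~\ref{sec:combin}, in particular Lemma~\ref{l:clone-clone}, Corollary~\ref{cor:cores} and property~\ref{i:close}. The remaining bookkeeping in step~\ref{i:black-contract} --- that the tracked face $\tau$ is never collapsed below dimension $n-2k-1$ --- is then routine.
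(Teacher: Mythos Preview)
The paper states Lemma~\ref{l:bd} without proof; the closest it comes to an argument is the reasoning embedded later in the proof of Lemma~\ref{l:interior}(i). So your proposal is already more explicit than what the paper offers, and a comparison has to be made against that later passage.

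Your argument for~(i) is correct and matches the intended reasoning: every black vertex of $\qk{n-1}k$ is cloned in step~\ref{i:black-out} or~\ref{i:black-equ}, cloning removes the original from the interior boundary, and the contractions of step~\ref{i:black-contract} involve only clones (the label $\clone n{(\clone{n-1}A)}$ contains $n$), so no original $\black A$ is reinstated.

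For~(ii) your strategy is the right one, but the ``bad pair'' framing is slightly too narrow. After steps~\ref{i:black-out}--\ref{i:black-contract} the clones of two black vertices $\black C$, $\black{C'}$ of $\qk{n-1}k$ become identified precisely when $\core C=\core{C'}$ (both land on $\black{\clone n C}=\black{\clone n{C'}}$, using Lemma~\ref{l:clone-clone}); the pairs $\Setx{\black C,\black{\clone{n-1}C}}$ are one instance of this, but not the only one. So the collapse of a face is governed by the coarser relation ``same core'' on its black vertices, not just by the specific bad pairs you list. The paper's own argument (inside the proof of Lemma~\ref{l:interior}(i), for the case $A\in\V[+]{n-1}k$, but equally valid here) bypasses the search for a single surviving face and instead invokes property~\ref{i:sing} --- applied to $\white A$ via the antipodal colour symmetry of $\qk{n-1}k$ --- to get, for \emph{each} core value $B'$, a maximal face through $\white A$ containing no $\black C$ with $\core C=B'$. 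This shows that no single post-contraction black vertex lies in every interior-boundary face through $\white A$, so $\white A$ is not coned off. That per-core statement is exactly what~\ref{i:sing} hands you; once you replace your bad-pair criterion with it, the remaining bookkeeping is, as you say, routine.
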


\begin{lemma}\label{l:interior}
  Let $\IB$ be the interior boundary of $\qbblackx n k$, where $n\geq
  2k+2$. 
  The following properties hold:
 \begin{enumerate}[label=(\roman*)]
 \item The vertex set of $\IB$ is
   \begin{equation*}
     V_\IB = \bigcup_{A\in \V[+]{n-1}k} \Setx{\black{\clone n A}, 
       \white A}.
   \end{equation*}
 \item $\IB$ is the induced subcomplex of $\qbblackx n k$ on $V_\IB$.
 \item $\IB$ is isomorphic to $\qk{n-3}{k-1}$, with the isomorphism
   determined by the mapping $f:\,A \mapsto \core A-1$ and preserving
   the colours (where $A$ is a vertex of $\IB$).
 \end{enumerate}
\end{lemma}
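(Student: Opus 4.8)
Part~(ii) is essentially a matter of definition: by the conventions of Section~\ref{sec:top}, $\IB=\IB(\qbblackx n k)$ is the subcomplex of $\qbblackx n k$ induced on the set of vertices lying in the closure of the interior, so once part~(i) is established, part~(ii) follows at once and part~(iii) becomes a statement about the induced subcomplex on $V_\IB$. Hence the substance lies in parts~(i) and~(iii).

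For part~(i), the plan is to follow the interior boundary through the construction of $\qbblackx n k$. The cases $k=1$ and $n=2k+2$, where $\qbblackx n k$ is given by an explicit description, are handled by inspection, so assume $k\geq 2$ and $n>2k+2$. Before step~\ref{i:black-out} the complex is $\qk{n-1}k$, which triangulates a sphere and therefore coincides with its own interior boundary. Two geometric facts govern the bookkeeping: (a)~each clone added in steps~\ref{i:black-out},~\ref{i:black-equ},~\ref{i:white} is placed on the open segment from its original vertex towards the origin, and so replaces that vertex on the \emph{current} interior boundary; (b)~each edge contracted in steps~\ref{i:black-contract},~\ref{i:last} has both ends on the current interior boundary, so the vertex it produces also lies on the interior boundary. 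Combining~(a) and~(b) with Lemma~\ref{l:bd} (which controls which original vertices survive on the interior boundary of the intermediate complex $\KK_{123}$) and with Lemma~\ref{l:clone-clone}, applied with $i=n-1<m=n$ (legitimate as $n\geq 2k+2$) so that the vertex $\black{\clone n {(\clone{n-1}A)}}$ created in step~\ref{i:black-contract} is the same clone $\black{\clone n A}$ already introduced in step~\ref{i:black-out}, one reads off that the black vertices of $\IB$ are exactly the $\black{\clone n A}$, and the white vertices the $\white A$, as $A$ ranges over $\V[+]{n-1}{k}$. Indeed: all original black vertices leave the interior boundary by Lemma~\ref{l:bd}(i); the vertices $\white A$ with $A\in\V[+]{n-1}{k}$ remain; and the vertices $\white A$ with $A\in\V[0]{n-1}{k}$ nonsingular, which lie on the interior boundary of $\KK_{123}$ by Lemma~\ref{l:bd}(ii), are removed by the contraction of step~\ref{i:last}, which merges each of them into $\white{\clone{n-1}A}$, with $\clone{n-1}A\in\V[+]{n-1}{k}$. (One must also verify that the singular sets, treated differently by step~\ref{i:white}, contribute no white vertex to $\IB$.) This gives $V(\IB)=V_\IB$, hence~(i) and~(ii).

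For part~(iii), the vertex bijection is already packaged in Section~\ref{sec:combin}: on white vertices, $f$ restricts to the bijection $\V[+]{n-1}{k}\to\V{n-3}{k-1}$ of Lemma~\ref{l:bijection} (with $n$ replaced by $n-1$); on black vertices, the observation following Corollary~\ref{cor:cores} yields $f(\clone n A)=f(A)$, so that $\black{\clone n A}\mapsto\black{f(A)}$ is well defined, injective by Corollary~\ref{cor:cores}, and sends the black vertex $\black{\clone n A}$ and the white vertex $\white A$ of $\IB$ to the two vertices of $\qk{n-3}{k-1}$ bearing the common label $f(A)$, with colours preserved. It remains to check that this bijection carries faces to faces, in both directions. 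I would argue this by induction on $n$: in the base case $n=2k+2$ both $\IB$ and $\qk{n-3}{k-1}=\qk{2k-1}{k-1}$ are circuits of the same length (the vertex count of $\IB$ is $2\size{\V[+]{2k+1}{k}}=2(2k-1)$ by Lemma~\ref{l:bijection}, matching the base circuit $\qk{2k-1}{k-1}$), so one need only verify that $f$ respects their cyclic orders, which can be read off the explicit form of $\qbblackx{2k+2}k$; in the inductive step one compares, face by face, the faces of $\IB$ arising from the clones and contractions of steps~\ref{i:black-out}--\ref{i:last} in the construction of $\qbblackx n k$ with the faces added on top of $\qk{n-4}{k-1}$ by the corresponding steps in the construction of $\qk{n-3}{k-1}$, using that $f$ intertwines the relevant operations on labels (chiefly the $n$-clone operation, again via the observation following Corollary~\ref{cor:cores}, and the core operation).

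I expect this face-matching in~(iii) to be the main obstacle: while the vertex-level correspondence drops out of the combinatorial lemmas of Section~\ref{sec:combin}, verifying that the recursively built face structure of $\IB$ is carried exactly onto that of $\qk{n-3}{k-1}$ requires a careful step-by-step comparison of the two constructions. A secondary difficulty is the bookkeeping flagged in~(i) --- establishing that the clones and contractions of steps~\ref{i:black-out}--\ref{i:last} leave \emph{precisely} the claimed vertices on the interior boundary, with the singular sets the likely source of trouble --- together with making the geometric assertions~(a) and~(b) rigorous within the deformation-retraction framework of Section~\ref{sec:top}.
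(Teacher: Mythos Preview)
Your treatment of parts~(i) and~(ii) matches the paper's. Your remark that~(ii) is immediate from~(i) and the definition of $\IB$ is correct; the paper nonetheless inserts a short verification. For~(i) you follow the same bookkeeping through steps~\ref{i:black-out}--\ref{i:last}, correctly flagging the two places where the argument is incomplete: that $\white A$ with $A\in\V[+]{n-1}k$ \emph{remains} on the interior boundary, and that the singular white vertices are \emph{removed}. The paper settles both using property~\ref{i:sing} of $\qk{n-1}k$ (in the singular case together with~\ref{i:sing-same}), so you should invoke these rather than leave them as loose ends.

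Part~(iii) is where there is a genuine gap. Your plan is to compare ``face by face'' the effect of steps~\ref{i:black-out}--\ref{i:last} on $\qk{n-1}k$ with ``corresponding steps in the construction of $\qk{n-3}{k-1}$'', but $\qk{n-3}{k-1}$ is not built from $\qk{n-4}{k-1}$ by anything resembling~\ref{i:black-out}--\ref{i:last}; it is obtained by glueing the two hemispheres $\qbblack{n-3}{k-1}$ and $\qbwhite{n-3}{k-1}$ along their common boundary (steps~\ref{i:upper}--\ref{i:glue}). So there is no natural parallel to run the comparison along, and your proposed induction has no base to push off from. The paper's device, which you are missing, is an auxiliary colour-preserving homomorphism $h\colon\qk{n-1}k\to\IB$ (sending $\black A\mapsto\black{\clone n A}$, and $\white B$ to $\white{\clone{n-1}B}$ or $\white B$ according as $B\in\V[0]{n-1}k$ or $B\in\V[+]{n-1}k$), shown to be surjective on faces. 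One then uses the hemispheric decomposition $\qk{n-1}k=\qbblack{n-1}k\cup\qbwhite{n-1}k$ and Observation~\ref{obs:onion} (removing the non-$\IB$ vertices from $\qbblack{n-1}k$ leaves a copy of $\qbwhite{n-3}{k-1}$): the restriction of $h$ to each resulting piece is injective, so $h$ carries the two pieces isomorphically onto subcomplexes of $\IB$ isomorphic to $\qbwhite{n-3}{k-1}$ and $\qbblack{n-3}{k-1}$, meeting in a copy of $\qk{n-4}{k-1}$. This exhibits $\IB$ as precisely the glueing~\ref{i:upper}--\ref{i:glue} that defines $\qk{n-3}{k-1}$. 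In short, the paper does not match faces directly under $f$; it transports the already-known structure of the exterior boundary $\qk{n-1}k$ onto $\IB$ via $h$.
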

\begin{proof}
  (i) For $n=2k+2$ or $k=1$, the assertion is easy to verify
  directly. Assume then that $n > 2k+2$ and $k > 1$. In
  steps~\ref{i:black-out} and \ref{i:black-equ}, we added clones of
  all black vertices of $\qk{n-1}k$. Thus, these vertices are not
  included in the interior boundary of the resulting complex, and this
  remains true after the contractions performed in
  step~\ref{i:black-contract}. On the other hand, all the added clones
  are vertices of $\IB$.

  For a similar reason, in view of step \ref{i:white}, no vertex
  $\white A$ with nonsingular $A\in\V[0]{n-1}k$ is a vertex of
  $\IB$. If $\white A$ is such that $A\in\V[0]{n-1}k$ is singular,
  then property~\ref{i:sing} of $\qk{n-1}k$ implies that for all the
  vertices $\black B$ adjacent to $\white A$, the sets $\core B$ are
  the same. This means that the complex resulting from
  steps~\ref{i:black-out}--\ref{i:black-contract} will contain the
  cone over the star of $\white A$, with apex $\black{\clone n B}$,
  where $\black B$ is any black vertex adjacent to $\white A$ in $\qk
  {n-1} k$. Thus, $\white A$ is not a vertex of $\IB$.

  It remains to show that any vertex $\white A$ with $A\in\V[+]{n-1}k$
  is a vertex of $\IB$. In this case, $A$ is nonsingular, so by
  property~\ref{i:sing} of $\qk{n-1}k$, for any vertex $\black B$
  adjacent to $\white A$, $\white A$ is adjacent to a face not
  containing any vertex $\black C$ with $\core B = \core C$. This
  implies that step~\ref{i:black-contract} does not eliminate $\white
  A$ from the interior boundary of the resulting complex.

  (ii) Since $\IB$ is, trivially, a subcomplex of $\qbblackx n k$, we
  need to show that each face of $\qbblackx n k$ with all vertices in
  $V_\IB$ is a face of $\IB$. To see this, note that each of the steps
  (B1)--\ref{i:last} maintains this property with respect to the
  interior boundary of the complex constructed thus far, and it is
  trivially satisfied for the starting complex $\qk{n-1}k$.

  (iii) Assume first that $n=2k+2$. For $j\in[2k-1]$, let $I'_j$ be
  the analogue of the independent set $I_j$, but defined in
  $C_{2k-1}$ and of size $k-1$. Thus, $I'_j =
  \Setx{j,j+2,\dots,j+2k-4}$ with arithmetic performed in
  $[2k-1]$. The assertion follows from the following property of the
  mapping $f$, valid for any $j\in[2k+1]$:
  \begin{equation*}
    f(I_j) =
    \begin{cases}
      I'_2 & \text{if $j=1$,}\\
      I'_1 & \text{if $j=2$,}\\
      I'_{j-1} & \text{if $3\leq j\leq 2k$,}\\
      I'_1 & \text{if $j=2k+1$.}
    \end{cases}
  \end{equation*}

  Let us now assume that $n > 2k+2$. Consider the mapping $h$ from the
  vertex set of $\qk{n-1}k$ to the vertex set of $\IB$, defined as
  follows:
  \begin{align*}
    h(\black A) &= \black{\clone n A}
      \text{\quad for $A\in\V{n-1}k$},\\
    h(\white B) &= \white{\clone{n-1} B} \text{\quad for
      $B\in\V[0]{n-1}k$},\\
    h(\white B) &= \white B \text{\quad for $B\in\V[+]{n-1}k$.}
  \end{align*}
  We claim that $h$ is a homomorphism of $2$-coloured complexes from
  $\qk{n-1}k$ to $\IB$. We need to show that for a face $\sigma$ of
  $\qk{n-1}k$, its image $h[\sigma]$ is a face of $\IB$. Indeed,
  consider a black vertex $\black A$ of $\sigma$ such that $A$ comes
  first in $\lex$. If $A\in\V[+]{n-1}k$, then a clone $\black{\clone n
    A}$ is added in step~\ref{i:black-out}, and the interior boundary
  of the resulting complex contains a face $\sigma_1 =
  \sigma\sm\Setx{\black A}\cup\Setx{\black{\clone n A}}$. In case
  $A\in\V[0]{n-1}k$, the same is true after performing
  steps~\ref{i:black-equ} and \ref{i:black-contract}. Proceeding
  similarly for the other black vertices of $\sigma_1$, we eventually
  obtain a face $\sigma'$ of $\IB$ in which each vertex $\black B$ of
  $\sigma$ is replaced by the clone $\black{\clone n B}$.

  The procedure for the white vertices $\white B$ of $\sigma'$ is
  similar: we replace each such vertex with $B\in\V[0]{n-1}k$ by its
  clone $\white{\clone{n-1}B}$ in one execution of steps~\ref{i:white}
  and \ref{i:white-contract}. Care needs to be taken if $B$ is
  singular, in which case these two steps are not executed. On the
  other hand, properties~\ref{i:sing} and \ref{i:sing-same} of
  $\qk{n-1}k$ imply that the cores of all black vertices adjacent to
  $\white B$ are the same, and the core of any white vertex adjacent
  to $\white B$ equals $\core B$. Thus the property that needs to be
  verified is just the existence of a single edge in $\qk{n-1}k$, and
  it follows by considering the nonsingular white vertex
  $\white{\clone{2k+1}B}$ instead of $\white B$.

  Thus, $h$ is a homomorphism as claimed. In addition, the above
  argument shows that each face of $\IB$ is the image of a face of
  $\qk{n-1}k$.

  Consider the exterior boundary $\qk{n-1}k$ of $\qbblackx n k$. By
  steps~\ref{i:upper}--\ref{i:glue} below, $\qk{n-1}k$ is obtained
  from $\qbblack{n-1}k$ and $\qbwhite{n-1}k$ by glueing them along
  their common boundary $\qk{n-2}k$ (viewed as the equator of
  $\qk{n-1}k$). Let $X$ be the set of vertices of $\qbblack{n-1}k$; it
  follows from the construction of $\qbblack{n-1}k$ and
  Lemma~\ref{l:all} below that
  \begin{align*}
    X = \bigcup_{A\in\V{n-2}k}\Setx{\black A,\white A}&\cup
    \Set{\black A}{A\in\bigcup_{i\geq 1} \V[2i-1]{n-1}k}\\
    &\cup \Set{\white A}{A\in\bigcup_{i\geq 1} \V[2i]{n-1}k}.
  \end{align*}
  In fact, $\qbblack{n-1}k$ is the induced subgraph of $\qk{n-1}k$ on
  $X$.

  As described in steps~\ref{i:identify}--\ref{i:relabel} below, the
  complex $\qbblack{n-1}k$ has been constructed as the union of the
  complex $\qbblackx{n-1}k$ and a complex, say $\KK^+$, isomorphic to
  $\qbwhite{n-3}{k-1}$. The intersection of these two subcomplexes is
  the interior boundary $\KK^0$ of $\qbblackx{n-1}k$. By part (i) of
  the lemma and induction, $\KK^0$ is isomorphic to $\qk{n-4}{k-1}$,
  and it is the induced subcomplex of $\qk{n-1}k$ on vertex set
  \begin{equation*}
    X^0 = \bigcup_{A\in\V[+]{n-2}k}\Setx{\black{\clone{n-1}
        A},\white{A}}.
  \end{equation*}
  
  By Observation~\ref{obs:onion}, $\KK^+$ is obtained from
  $\qbblack{n-1}k$ by removing the set of vertices
  \begin{equation*}
    Y = \Set{\black A}{A\in\V[0]{n-1} k}\cup
    \Set{\white A}{A\in\V[0]{n-2}k}.
  \end{equation*}

  Using Corollary~\ref{cor:cores} and inspecting the definition of
  $h$, we find that the restriction of $h$ to the vertex set of
  $\KK^+$, namely $X\sm Y$, is one-to-one. Let $\LL^+$ be the image of
  $\KK^+$ under $h$, and define $\LL^0$ as the image of
  $\KK^0$. Furthermore, let $\KK^-$ be the antipodal copy of $\KK^+$,
  and let $\LL^-$ be the image of $\KK^-$ under $h$. Since $h$ is also
  one-to-one when restricted to the vertex set of $\KK^-$, $\LL^-$ is
  isomorphic to $\qbblack{n-3}{k-1}$.

  From the definition of $h$, it follows that a vertex $\black A$ or
  $\white A$ is mapped by $h$ to $\LL^0$ if and only if $A\in
  \V[0]{n-1}k \cup \V[1]{n-1}k$. Consequently, the intersection of
  $\LL^+$ and $\LL^-$ equals $\LL^0$. It also follows that $\KK^+$ and
  $\KK^-$ are mapped isomorphically to $\LL^+$ and $\LL^-$,
  respectively.

  We have expressed $\IB$ as the union of two complexes, one
  isomorphic to $\qbblack{n-3}{k-1}$ and the other one to
  $\qbwhite{n-3}{k-1}$, intersecting in a subcomplex isomorphic to
  $\qk{n-4}{k-1}$. In view of steps~(K1)--\ref{i:glue} below, this
  implies that $\IB$ is isomorphic to $\qk{n-3}{k-1}$ as claimed.
\end{proof}

We can now finish the construction of $\qbblack n k$ (see
Figure~\ref{fig:qb} for an illustration):
\begin{enumerate}[label=(B\arabic*),start=6]
\item\label{i:identify} We identify the interior boundary of
  $\qbblackx n k$ with $\qk{n-3}{k-1}$ via the isomorphism of
  Lemma~\ref{l:interior}(iii).
\item\label{i:recurse} Applying the recursion, we extend this
  embedding of $\qk{n-3}{k-1}$ to an embedding of $\qbwhite{n-2}{k-1}$
  (note the change of colour).
\item\label{i:fillin} We form the complex $\qbblack n k$ as the union
  of $\qbblackx n k$ (constructed above) and $\qbwhite{n-2}{k-1}$.
\item \label{i:relabel} We give an explicit rule to relabel the
  vertices of $\qbwhite{n-2}{k-1}$ with elements of $\V n k$ in such a
  way that the labelling of the boundary matches the original
  labelling in $\qbblackx n k$ and each element of $\V n k$ appears as
  the label of a vertex (either a unique non-boundary vertex, or two
  antipodal boundary vertices).
\end{enumerate}

\begin{observation}\label{obs:onion}
  Let $Y$ be the set of vertices not contained in the interior
  boundary $\IB$ of $\qbblackx n k$. Then the complex $\qbblack n k\sm
  Y$, obtained by removing all the vertices in $Y$, is isomorphic to
  $\qbwhite{n-2}{k-1}$.
\end{observation}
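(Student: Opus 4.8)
The plan is to read the observation directly off the definition of $\qbblack n k$ in steps~\ref{i:identify}--\ref{i:relabel}. There, $\qbblack n k$ is formed, in step~\ref{i:fillin}, as the union of $\qbblackx n k$ with a copy of $\qbwhite{n-2}{k-1}$; the latter is embedded, in steps~\ref{i:identify}--\ref{i:recurse}, so that its boundary subcomplex is exactly the interior boundary $\IB$ of $\qbblackx n k$ (identified with $\qk{n-3}{k-1}$ through the isomorphism of Lemma~\ref{l:interior}(iii)), while its remaining vertices are embedded in the interior of $\qbblackx n k$ and hence are not vertices of $\qbblackx n k$. So, writing $V_\IB$ for the vertex set of $\IB$ described in Lemma~\ref{l:interior}(i), we have $V(\qbblackx n k)\cap V(\qbwhite{n-2}{k-1}) = V_\IB$.

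First I would pin down the set $Y$ to be removed: the vertices of $\qbblack n k$ that do not lie in $\IB$ are precisely the vertices of $\qbblackx n k$ lying strictly outside its interior, i.e.\ $Y = V(\qbblackx n k)\sm V_\IB$; by the identity just recorded, $Y$ is disjoint from $V(\qbwhite{n-2}{k-1})$, so after deleting $Y$ the surviving vertex set is exactly $V(\qbwhite{n-2}{k-1})$. Second, I would check that no extraneous face survives. Every face of $\qbblack n k$ is a face of $\qbblackx n k$ or of $\qbwhite{n-2}{k-1}$, since $\qbblack n k$ is their union; a face of the latter kind needs no comment, and a face of the former kind contained in $V(\qbwhite{n-2}{k-1})$ is contained in $V_\IB$, hence is a face of $\IB$ by Lemma~\ref{l:interior}(ii), hence a face of $\qbwhite{n-2}{k-1}$ because $\IB$ is a subcomplex of the latter. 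Therefore the complex obtained from $\qbblack n k$ by deleting $Y$ is $\qbwhite{n-2}{k-1}$ itself, up to the relabelling of vertices performed in step~\ref{i:relabel}; that relabelling furnishes the asserted isomorphism.

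I do not expect a genuine obstacle: the observation is in essence a bookkeeping consequence of the construction, and the special case $n = 2k+2$ is handled identically, since the formation of $\qbblack n k$ in step~\ref{i:fillin} does not depend on the way $\qbblackx n k$ was built (the case $k=1$, where $\IB$ is empty, is degenerate). The two points that deserve care are the precise description of $Y$ --- it must comprise the whole of $\qbblackx n k$ minus $\IB$, and must omit every interior vertex of the glued copy of $\qbwhite{n-2}{k-1}$ --- and the appeal to Lemma~\ref{l:interior}(ii), that $\IB$ is an \emph{induced} subcomplex of $\qbblackx n k$: this is exactly what forbids a face of $\qbblackx n k$ supported entirely on $V_\IB$ that is not a face of $\IB$, and so guarantees that deleting $Y$ leaves behind nothing more than $\qbwhite{n-2}{k-1}$.
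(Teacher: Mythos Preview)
Your argument is correct and matches the paper's intent: the observation is stated without proof precisely because it is meant to be read off steps~\ref{i:identify}--\ref{i:fillin} together with Lemma~\ref{l:interior}(ii), exactly as you do. One phrasing slip to fix: the sentence ``the vertices of $\qbblack n k$ that do not lie in $\IB$ are precisely the vertices of $\qbblackx n k$ lying strictly outside its interior'' is false as written, since $\qbblack n k$ also contains the interior vertices of the glued copy of $\qbwhite{n-2}{k-1}$; the observation defines $Y$ as the vertices of $\qbblackx n k$ (not $\qbblack n k$) outside $\IB$, and your subsequent identification $Y = V(\qbblackx n k)\sm V_\IB$ and the rest of the argument are correct with that reading.
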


\begin{figure}
  \centering
  \fig2\hf\fig9\\
  \fig4
  \caption{The construction of $\qbblack83$. Top left:
    $\qbblackx83$. Top right: $\qbwhite62$. Bottom: Filling in
    $\qbblackx83$ using $\qbwhite62$ produces $\qbblack83$. The
    labelling of the vertices inside the disk is discussed in
    rule~\ref{i:relabel}.}
  \label{fig:qb}
\end{figure}

To relabel the vertices of $\qbwhite{n-2}{k-1}$ so as to accomplish
step~\ref{i:relabel}, we will use the mapping $g_n$ of
Section~\ref{sec:combin}; recall that for $A\in\V{n-2}{k-1}$,
\begin{equation*}
  g_n(A) =
  \begin{cases}
    (A + 1) \cup \Setx 1 & \text{if $n-2\in A$,}\\
    (A + 1) \cup \Setx n & \text{otherwise.}     
  \end{cases}  
\end{equation*}

We relabel each black vertex $\black A$ of $\qbwhite{n-2}{k-1}$ to
$\black{g_n(A)}$ (cf. Figure~\ref{fig:qb}). A white vertex $\white A$
is relabelled to
\begin{align*}
  \white{g_n(A)} & \text{\quad if $A\in\V[+]{n-2}{k-1}$,}\\
  \white{g_{n-1}(A)} & \text{\quad otherwise.}
\end{align*}
We need to check that any vertex at the interior boundary of
$\qbblackx n k$ is mapped to itself by $g_n\circ f$ ($g_{n-1}\circ f$,
respectively). These are the vertices in the set $V_\IB$ defined in
Lemma~\ref{l:interior}(i). Recall that
\begin{equation*}
  V_\IB = \bigcup_{A\in \V[+]{n-1}k} \Setx{\black{\clone n A}, \white
    A}.
\end{equation*}
It follows from Lemma~\ref{l:bijection} that for $A\in\V[+]{n-1}k$,
$g_{n-1}(f(A)) = A$ and $g_n(f(\clone n A)) = \clone n A$, proving the
requested property. Further properties of the labelling will be proved
in Lemmas~\ref{l:all} and~\ref{l:disj} below.

We finally construct $\qk n k$ as follows:
\begin{enumerate}[label=(K\arabic*)]
\item\label{i:upper} We embed a deformed copy of $\qbblack n k$ in
  $\RR^{n-2k+1}$, with its vertices placed in the closed upper
  hemisphere $H^+$ of $S^{n-2k}$, in such a way that the embedded
  complex is boundary-antisymmetric (thus, the boundary $\qk{n-1}k$ is
  necessarily embedded in the `equator' $S^{n-2k-1}$).
\item\label{i:lower} Projecting each vertex of $\qbblack n k$ to its
  antipode in $S^{n-2k}$ and inverting its colour, we obtain a copy of
  $\qbwhite n k$ in $H^-$ that matches the former copy at the
  boundary.
\item\label{i:glue} $\qk n k$ is the result of glueing the two
  triangulated hemispheres together along their boundaries.
\end{enumerate}

In several lemmas, we now verify the properties of $\qk n k$ required
by Theorem~\ref{t:emb}.

\begin{lemma}\label{l:all}
  Each element of $\V n k$ appears as (the label of) a vertex of $\qk
  n k$.
\end{lemma}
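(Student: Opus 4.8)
The plan is to argue by induction on $n$. The base case $n = 2k+1$ is immediate: the vertices of $\qk{2k+1}k$ are exactly $\black{I_j}$ and $\white{I_j}$ for $j \in [2k+1]$, and every element of $\V{2k+1}k$ is of the form $I_j$ for some $j$ (a maximum independent set of size $k$ in the circuit $C_{2k+1}$ is determined, up to rotation, by its "doubled gap"). For the inductive step, suppose $n > 2k+1$ and every element of $\V{n-1}k$ labels a vertex of $\qk{n-1}k$. By construction~(K1)--(K3), $\qk n k$ contains (an embedded deformed copy of) $\qbblack n k$, so it suffices to show that every $A \in \V n k$ labels a vertex of $\qbblack n k$. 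Using Lemma~\ref{l:level}, partition $\V n k = \V[0]n k \cup \V[+]n k = \V{n-1}k \cup \V[+]n k$.

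First I would handle $\V{n-1}k = \V[0]n k$. Every such $A$ already labels a vertex of $\qk{n-1}k$, which is the exterior boundary of $\qbblackx n k$ and hence a subcomplex of $\qbblack n k$; the only subtlety is that steps~(B2)--(B3) and (B4)--(B5) contract certain edges, but these contractions only identify a temporary clone with an existing clone vertex and never delete an original vertex $\black A$ or $\white A$ of $\qk{n-1}k$, so the label $A$ survives. Next I would handle $A \in \V[+]n k$. By Lemma~\ref{l:bijection}, $f$ restricts to a bijection $\V[+]n k \to \V{n-2}{k-1}$ with inverse $g_n$, so $A = g_n(Y)$ for a unique $Y \in \V{n-2}{k-1}$. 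By step~(B7) (the recursion) together with step~(B8), $\qbblack n k$ contains a copy of $\qbwhite{n-2}{k-1}$; by induction applied one dimension down, every element of $\V{n-2}{k-1}$ labels a vertex of $\qk{n-2}{k-1}$ and hence of $\qbwhite{n-2}{k-1}$. The relabelling rule of step~(B9) sends a vertex labelled $Y$ in this copy to $g_n(Y) = A$ (black vertices always via $g_n$; white vertices via $g_n$ when $Y \in \V[+]{n-2}{k-1}$ and via $g_{n-1}$ otherwise), and one checks that the two sets $\Set{g_n(Y)}{Y}$ and $\Set{g_{n-1}(Y)}{Y}$ together with the boundary already cover all of $\V[+]n k$: the interior-boundary vertices $\white A$ with $A \in \V[+]{n-1}k$ are fixed by $g_{n-1}\circ f$, the interior-boundary vertices $\black{\clone n A}$ are fixed by $g_n \circ f$, and the remaining elements of $\V[+]n k$ arise as $g_n$-images of non-boundary vertices, as was checked in the discussion preceding the lemma.

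The main obstacle is bookkeeping rather than anything deep: one must verify that the three relabelling regimes (black via $g_n$, white via $g_n$ or $g_{n-1}$) are mutually consistent on the shared interior boundary $V_\IB$ and jointly surjective onto $\V[+]n k$, i.e. that no element of $\V[+]n k$ is missed and the boundary identifications are compatible. The surjectivity follows from Lemma~\ref{l:bijection} ($g_n$ is a bijection onto $\V[+]n k$), and the boundary compatibility is exactly the computation $g_{n-1}(f(A)) = A$ and $g_n(f(\clone n A)) = \clone n A$ for $A \in \V[+]{n-1}k$ recorded just before the lemma statement. Care is needed for singular sets, where steps~(B4)--(B5) are skipped, but since a singular $A$ lies in $\V{2k}k \subseteq \V{n-1}k = \V[0]n k$ it is covered by the exterior-boundary argument and causes no gap.
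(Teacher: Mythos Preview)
Your proposal is correct and follows essentially the same inductive strategy as the paper: induct on $n$, use Lemma~\ref{l:level} to split $\V n k$ into $\V{n-1}k$ (covered by the subcomplex $\qk{n-1}k$) and $\V[+] n k$ (covered via the relabelling $g_n$ of the recursively inserted $\qbwhite{n-2}{k-1}$, using Lemma~\ref{l:bijection}). The paper's proof is terser---it simply picks $B=f(A)$, notes that $B$ labels a vertex of $\qbwhite{n-2}{k-1}$, and observes that the black copy is relabelled to $g_n(B)=A$---whereas you spell out the survival of original labels under the contractions in (B3)/(B5), the three relabelling regimes, and the boundary compatibility; this extra bookkeeping is accurate but not strictly needed for the argument.
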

\begin{proof}
  The assertion is easy to check for $n=2k+1$. If $n > 2k+1$, we
  inductively assume that it is true for $n-1$. Thus, any set $A\in
  \V{n-1}k$ is the label of a vertex of $\qk{n-1}k\subseteq\qk n k$.

  By Lemma~\ref{l:level}, it is sufficient to consider a set $A\in
  \V[+] n k$. Let $B = f(A)$, where $B\in\V{n-2}{k-1}$. By the
  induction hypothesis, $B$ is the label of a vertex of
  $\qk{n-2}{k-1}$, and hence of the complex $\qbwhite{n-2}{k-1}$ used
  in the construction of $\qbblack n k$. We may assume that the vertex
  is $\black B$ (the argument for $\white B$ being symmetric). The
  vertex was labelled with $g_n(B)$ in $\qbblack n k$; by
  Lemma~\ref{l:bijection}(ii), $g_n(f(A)) = A$ when $A\in \V[+] n k$,
  so $A$ does appear as a vertex label in $\qbblack n k$ and $\qk n
  k$.
\end{proof}

\begin{lemma}\label{l:bi-ind}
  Any edge $\black A\white B$ of $\qbblackx n k$, where
  $A,B\in\V{n-1}k$, is an edge of $\qk{n-1}k$.
\end{lemma}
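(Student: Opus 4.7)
The plan is to walk through the five steps (B1)--(B5) in the construction of $\qbblackx n k$ from $\qk{n-1}k$ and verify that none of them introduces a black-white edge whose two endpoint labels both lie in $\V{n-1}k$. Since $\qk{n-1}k$ is a subcomplex of $\qbblackx n k$, this suffices for the lemma.

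First I would dispose of the two special configurations. When $k=1$, $\qbblackx n 1$ is the cone over $\qk{n-1}1$ with apex $\black n$, so every new edge contains the endpoint $\black n$, whose label $\Setx n$ does not lie in $\V{n-1}1$. When $n=2k+2$, the only vertices added to $\qk{2k+1}k$ are clones $\black{\clone n {I_j}}$, whose labels all contain $n$, and the claim is immediate.

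For the main case $n>2k+2$ and $k>1$, I would track the labels attached to the vertices produced by (B1)--(B5). Step (B1) adds vertices with labels $\clone n A$ containing $n$; steps (B2) and (B4) use the fresh labels $A_*$ and $B_*$; none of these lies in $\V{n-1}k$, so every edge they introduce already has an endpoint outside $\V{n-1}k$. The contraction (B3) merges $\black{A_*}$ with $\black{\clone n (\clone{n-1}A)}$, producing, via Lemma~\ref{l:clone-clone}, a vertex still labelled $\black{\clone n A}$ whose label contains $n$; inherited edges therefore also avoid the forbidden pair of labels.

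The crux, and the step I expect to be the main obstacle, is (B5). Here the contraction of $\white{B_*}$ with $\white{\clone{n-1}B}$ yields a vertex whose retained label $\clone{n-1}B$ does belong to $\V{n-1}k$. Its neighbours after contraction split into the old neighbours of $\white{\clone{n-1}B}$ (already in $\qk{n-1}k$, or arising from (B1) and therefore with label containing $n$) and the new neighbours inherited from $\white{B_*}$. By the definition of the clone operation, the latter are exactly the vertices in the star of $\white B$ in the interior boundary of the complex produced by (B1)--(B3) together with the earlier (B4) additions (which only involve white vertices). The decisive input is Lemma~\ref{l:bd}(i): no original vertex $\black{A'}$ with $A'\in\V{n-1}k$ lies in this interior boundary, so every black neighbour of $\white{B_*}$ must be a clone $\black{\clone n A}$ with label containing $n$. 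Consequently (B5) creates no black-white edge with both endpoint labels in $\V{n-1}k$, and the lemma follows.
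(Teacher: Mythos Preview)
Your argument is correct and follows essentially the same route as the paper's proof: both hinge on Lemma~\ref{l:bd}(i) to rule out the possibility that a black vertex with label in $\V{n-1}k$ participates in any newly created bichromatic edge. The paper's version is slightly more economical in that it argues directly from the black side: since $\black A$ (with $A\in\V{n-1}k$) is not in the interior boundary after steps (B1)--(B3), none of the remaining steps (B4)--(B5) can touch its incident edges at all, so no case analysis of (B5) from the white side is needed. Your treatment of the special cases $k=1$ and $n=2k+2$ is more explicit than the paper's, which is a small bonus.
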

\begin{proof}
  Consider an edge $\black A\white B$ of $\qbblackx n k$ but not of
  $\qk{n-1}k$, where $A,B\in\V{n-1}k$. In the construction of
  $\qbblackx n k$, which starts from $\qk{n-1}k$, the edge $\black
  A\white B$ was not added in
  steps~\ref{i:black-out}--\ref{i:black-contract} as these steps
  consist in adding clones of black vertices and contracting edges
  joining these clones. By Lemma~\ref{l:bd}(i), after
  step~\ref{i:black-contract} is completed, $\black A$ is not
  contained in the interior boundary of the resulting complex
  $\LL$. Consequently, steps~\ref{i:white}--\ref{i:white-contract} do
  not influence the set of edges incident with $\black A$. Thus, there
  is no step where $\black A\white B$ can be added, which is a
  contradiction.
\end{proof}

\begin{lemma}\label{l:disj}
  For any $A,B\in\V n k$ such that $\black A$ and $\white B$ are
  adjacent in $\qk n k$, $A\cap B=\emptyset$.
\end{lemma}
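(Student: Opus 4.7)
The plan is to prove the lemma by induction on $n$ (for each fixed $k$), in parallel with an auxiliary structural claim. For the base case $n=2k+1$, I would observe that $\qk{2k+1}{k}$ is a cycle whose BW-edges all have the form $\black{I_j}\white{I_{j\pm 1}}$; a short direct calculation shows that $I_j$ and $I_{j+1}$ partition $[2k+1]\setminus\Setx{j-1}$ modulo $2k+1$, so they are disjoint.

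For the inductive step, the colour-swap symmetry between $\qbblack{n}{k}$ and $\qbwhite{n}{k}$ in steps~\ref{i:upper}--\ref{i:glue} reduces the problem to an edge in $\qbblack{n}{k}=\qbblackx{n}{k}\cup\qbwhite{n-2}{k-1}$. If the edge lies in $\qbblackx{n}{k}$ and both labels lie in $\V{n-1}{k}$, then Lemma~\ref{l:bi-ind} places the edge in $\qk{n-1}{k}$ and the inductive hypothesis yields $A\cap B=\emptyset$. Otherwise $A=\clone{n}{A'}$ for some $A'\in\V[+]{n-1}{k}$, and tracing steps~\ref{i:black-out}--\ref{i:black-contract} (together with the analogous steps~\ref{i:white}--\ref{i:white-contract} that may modify the white endpoint) shows that the BW-adjacencies of $\black{\clone{n}{A'}}$ come from BW-adjacencies of $\black{A'}$, or of $\black{A''}$ with $\clone{n}{A''}=\clone{n}{A'}$, in $\qk{n-1}{k}$. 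Since $\clone{n}{X}=\core{X}\cup\Setx{n}$ with $\core{X}\subseteq X$ and $n\notin B\subseteq[n-1]$, the inductive hypothesis then gives $A\cap B=\emptyset$.

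The more delicate case is when the edge lies in the relabelled filling $\qbwhite{n-2}{k-1}$. Its pre-relabelling labels $A_0,B_0\in\V{n-2}{k-1}$ are disjoint by the inductive hypothesis applied to $\qk{n-2}{k-1}$, and step~\ref{i:relabel} sends them to $g_n(A_0)$ and to $R(B_0)$, where $R(B_0)$ equals $g_n(B_0)$ if $B_0\in\V[+]{n-2}{k-1}$ and $g_{n-1}(B_0)$ otherwise. Because the shift $X\mapsto X+1$ preserves disjointness, only the added singletons $\Setx{1}$, $\Setx{n-1}$, or $\Setx{n}$ could spoil $g_n(A_0)\cap R(B_0)=\emptyset$; this is the main obstacle I anticipate, to be resolved via a structural auxiliary claim.

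The auxiliary claim, which I would prove by parallel induction on $k$, asserts that in $\qbwhite{N}{K}$ no black vertex has $N$ in its label and every white vertex whose label lies in $\V[+]{N}{K}$ contains $N$, together with the colour-swapped statements for $\qbblack{N}{K}$. The base case $K=1$ is immediate from the cone-with-apex construction, and in the inductive step one uses that $g_N(X)$ adds $\Setx{N}$ precisely when $N-2\notin X$, so the colour-swapped form of the claim at $(N-2,K-1)$ forces every relabelled contribution from the filling $\qbblack{N-2}{K-1}$ to exhibit the required pattern. Granted the claim, $n-2\notin A_0$ in the delicate case; a routine split on whether $n-2\in B_0$ (when $B_0\in\V[+]{n-2}{k-1}$) or $n-3\in B_0$ (when $B_0\in\V[0]{n-2}{k-1}$) then shows that $g_n(A_0)=(A_0+1)\cup\Setx{n}$ is disjoint from $R(B_0)$ in every remaining sub-case, completing the induction.
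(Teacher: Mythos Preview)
Your overall architecture matches the paper's: induction on $n$, reduction by colour-swap to $\qbblack n k$, and a split into edges of $\qbblackx n k$ versus edges of the relabelled filling $\qbwhite{n-2}{k-1}$. For the $\qbblackx n k$ case you use Lemma~\ref{l:bi-ind} and trace the clone/contract steps, just as the paper does (the paper spells out the two sub-cases $B\in\V{n-1}k$ and $B=\clone{n-1}D$ more explicitly; your sketch of this part is a little loose---when both endpoints have been modified you need $\core{A''}\cap\core{D'}=\emptyset$ together with Observation~\ref{obs:extreme} to exclude $n-1$ from $\core{A''}$, not just ``$n\notin B$''---but the idea is right).

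The genuine divergence is in the filling case. The paper does not introduce a new auxiliary claim: it invokes the invariant~\ref{i:close}, observing that if $n-2\notin f(A)\cup f(B)$ then both $(n-2)$-levels are even, hence by~\ref{i:close} both are zero, placing the edge on the exterior boundary $\qk{n-3}{k-1}$ of the filling and hence already in $\qbblackx n k$, a contradiction. This is short but leans on~\ref{i:close}, whose verification the paper leaves to the reader. Your route---proving by a parallel induction that in $\qbwhite{N}{K}$ no black label contains $N$ while every white label in $\V[+]{N}{K}$ does---is a valid alternative: it yields $n-2\notin A_0$ and (when $B_0\in\V[+]{n-2}{k-1}$) $n-2\in B_0$, after which the case analysis on $g_n$/$g_{n-1}$ goes through as you describe. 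Your argument is longer but more self-contained, since it avoids appealing to the unproved~\ref{i:close}; the paper's is slicker but depends on that invariant.
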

\begin{proof}
  We proceed by induction on $n$. The claim is easy to verify for
  $n=2k+1$. Assume that this is not the case; in addition, we may
  assume that $k > 1$. Let $\black A\white B$ be an edge of $\qk n k$.

  Without loss of generality, $\black A\white B$ is an edge of
  $\qbblack n k$ but not of $\qk{n-1}k$. Suppose first that $\black
  A\white B$ is an edge of $\qbblackx n k$. By the fact that each
  white vertex of $\qbblackx n k$ is a vertex of $\qk{n-1}k$ and by
  Lemma~\ref{l:bi-ind}, we find that $\black A$ is not a vertex of
  $\qk{n-1}k$. Inspecting steps~(B1)--\ref{i:last} of the
  construction, we observe that there are two possibilities:
  \begin{itemize}
  \item there is a set $C\in\V{n-1}k$ such that $A=\clone n C$ and
    $\black C\white B$ is an edge of $\qk{n-1}k$, or
  \item there are sets $C\in\V{n-1}k$, $D\in\V[0]{n-1}k$ such that
    $A=\clone n C$, $B = \clone{n-1}D$ and $\black C\white D$ is an
    edge of $\qk{n-1}k$.
  \end{itemize}
  In the first case, $C\cap B = \emptyset$ by the induction hypothesis
  and $n\notin C$, so $A\cap B = \emptyset$. In the second case, we
  similarly have $C\cap D = \emptyset$ by the induction hypothesis;
  since $n-1\notin A$ and $n\notin B$, we conclude $A\cap
  B=\emptyset$.

  It remains to consider the case that the edge $\black A\white B$ is
  not an edge of $\qbblackx n k$. By the construction of $\qbblack n
  k$, $\black{f(A)}\white{f(B)}$ is an edge of
  $\qbwhite{n-2}{k-1}$. By the induction hypothesis, $f(A)\cap f(B) =
  \emptyset$. By Lemma~\ref{l:bijection}, since $A,B\in\V[+] n k$, $A
  = g_n(f(A))$ and $B=g_n(f(B))$. The definition of $g_n$ shows that
  $A\cap B=\emptyset$ if one of $f(A)$, $f(B)$ contains $n-2$. Suppose
  thus that $n-2\notin f(A)\cup f(B)$. Then the $(n-2)$-level of both
  $f(A)$ and $f(B)$ is even. Since the vertices $\black{f(A)}$ and
  $\white{f(B)}$ have different colours, the $(n-2)$-levels actually
  have to be zero by property~\ref{i:close} of $\qk{n-2}{k-1}$. Thus,
  $f(A),f(B)\in \V[0]{n-2}{k-1}$, so $\black{f(A)}\white{f(B)}$ is an
  edge of the exterior boundary $\qk{n-3}{k-1}$ of
  $\qbwhite{n-2}{k-1}$ --- but then $\black A\white B$ would be an
  edge of the exterior boundary of $\qbblackx n k$, a contradiction.
\end{proof}

Lemmas~\ref{l:all} and \ref{l:disj} imply part (iii) of
Theorem~\ref{t:emb}. Parts (i) and (iv) follow easily from the
construction. Part (ii) is a consequence of the following lemma:

\begin{lemma}\label{l:mono}
  The complex $\qk n k$ contains no monochromatic maximal faces.
\end{lemma}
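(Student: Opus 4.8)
The plan is to prove the statement by induction on $n$, following the recursive construction of $\qk n k$. The base case $n=2k+1$ is immediate: $\qk{2k+1}k$ is an even circuit whose vertices alternate in colour, so every (maximal) face is an edge with one black and one white end. For the inductive step, assume the claim holds for all smaller values of $n$ (and for $\qk{n-2}{k-1}$, to which the recursion also appeals). Since $\qk n k$ is glued together from a copy of $\qbblack n k$ in the upper hemisphere and a colour-inverted antipodal copy $\qbwhite n k$ in the lower hemisphere, and since colour inversion turns monochromatic faces into monochromatic faces, it suffices to show that $\qbblack n k$ has no monochromatic maximal face. A maximal face of $\qbblack n k$ either lies in the "outer shell" $\qbblackx n k$ or is a maximal face of the filled-in interior part, which is a relabelled copy of $\qbwhite{n-2}{k-1}$.

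First I would handle the interior part. A maximal face of $\qbwhite{n-2}{k-1}$ is, by the induction hypothesis applied to $\qk{n-2}{k-1}$ (and the colour-inversion remark), non-monochromatic; relabelling the vertices via $g_n$/$g_{n-1}$ does not change their colours, so the corresponding face of $\qbblack n k$ is non-monochromatic too. One subtlety: a maximal face of $\qbblack n k$ contained in $\qbwhite{n-2}{k-1}$ might a priori fail to be maximal in $\qbwhite{n-2}{k-1}$ if it extends into $\qbblackx n k$; but any such extension passes through the shared interior boundary $\IB(\qbblackx n k)$, and in any case a face extended by vertices of $\qbblackx n k$ lying outside $\IB$ brings in at least one vertex of the outer shell, reducing us to the next case.

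Next I would treat a maximal face $\sigma$ of $\qbblackx n k$ itself. Recall $\qbblackx n k$ is obtained from $\qk{n-1}k$ by adding clones of certain vertices and contracting some edges between a vertex and its clone. Every white vertex of $\qbblackx n k$ is still a white vertex of $\qk{n-1}k$ (no white clones were added in steps \ref{i:black-out}--\ref{i:last}), and clones of black vertices are black. So a monochromatic maximal face would be either (a) all white, hence a face of $\qk{n-1}k$ (clones of black vertices being the only new vertices), or (b) all black. For (a): a maximal all-white face of $\qbblackx n k$ would, since no black clone is adjacent only to white vertices in a way that fails to extend it — more carefully, since every white vertex $\white B$ in $\IB$ gets a black clone $\black{\clone n B}$ adjacent to its whole star, and every white vertex not in $\IB$ already lay in a non-monochromatic maximal face of $\qk{n-1}k$ by induction — contradict maximality or the inductive hypothesis. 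For (b): a maximal all-black face would have to be analysed through the contraction steps \ref{i:black-contract}; here property~\ref{i:sing-same} and Lemma~\ref{l:bd} control which black vertices become mutually adjacent, and one checks that every black vertex of $\qbblackx n k$ lies on an edge to some white vertex that extends any all-black face. The special cases $k=1$ (cone over $\qk{n-1}1$ with black apex $\black n$) and $n=2k+2$ should be checked directly: for the cone, a maximal face is $\black n$ joined to a maximal face of $\qk{n-1}1$, which is non-monochromatic by induction.

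The main obstacle I anticipate is case (b), the all-black faces in $\qbblackx n k$, because the edge contractions in step~\ref{i:black-contract} (and \ref{i:white-contract}) genuinely create new adjacencies among black vertices and their clones, so one must argue carefully — using properties \ref{i:sing}, \ref{i:sing-same}, and Lemma~\ref{l:bd}(i) — that no maximal face consisting solely of black originals and black clones can arise, i.e.\ that every black vertex still sees a white neighbour inside any face it belongs to. Organising the bookkeeping of which clones are identified with which originals, and tracking the star of each black vertex through steps \ref{i:black-out}--\ref{i:black-contract}, is where the real work lies; everything else reduces cleanly to the induction hypothesis via the hemisphere decomposition and the colour-preserving relabelling.
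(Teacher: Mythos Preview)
Your inductive scaffolding (hemisphere decomposition, then split $\qbblack n k$ into $\qbblackx n k$ and the filled interior $\qbwhite{n-2}{k-1}$) matches the paper exactly, and your treatment of the interior part via induction on $\qk{n-2}{k-1}$ is fine. The gap is in how you handle $\qbblackx n k$.

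First, a concrete error: you write that ``every white vertex $\white B$ in $\IB$ gets a black clone $\black{\clone n B}$.'' Clones are always the \emph{same} colour as the original (see the definition in Section~\ref{sec:top} and steps~\ref{i:black-out},~\ref{i:white}); no white vertex ever acquires a black clone, so your argument for case~(a) does not go through. Relatedly, ``no white clones were added in steps~\ref{i:black-out}--\ref{i:last}'' is false (step~\ref{i:white} adds white temporary clones), and after the contractions of step~\ref{i:white-contract} an all-white face of $\qbblackx n k$ need not be a face of $\qk{n-1}k$, so you cannot simply invoke the inductive hypothesis there.

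Second, the ``real work'' you anticipate in case~(b) is unnecessary. The paper's key observation is that both primitive operations used to build $\qbblackx n k$ from $\qk{n-1}k$---adding a same-colour clone, and contracting a monochromatic edge---automatically preserve the property of having no monochromatic maximal face. For cloning: every maximal face of the new complex either is a maximal face of the old one, or has the form $\sigma\cup\{v^*\}$ with $\sigma$ a maximal face of the old complex containing $v$; in either case it inherits both colours. For contracting a monochromatic edge: every maximal face after contraction is the image of some maximal face before, and since only same-coloured vertices are merged, both colours survive. This single remark dispatches cases~(a) and~(b) simultaneously with no bookkeeping. The paper also treats $n=2k+2$ as a separate base case (since $\qbblackx{2k+2}k$ is built by an ad hoc rule rather than by \ref{i:black-out}--\ref{i:last}), checking directly that each maximal face meets either the interior or the exterior boundary; you should do the same.
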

\begin{proof}
  The claim is certainly true for $\qk{2k+1}k$. For $\qk{2k+2}k$, it
  follows from the fact that each maximal face of $\qbblackx{2k+2}k$
  contains a $1$-dimensional face of either the exterior boundary or
  the interior boundary, and is therefore not monochromatic. With
  these base cases in hand, the lemma is proved by induction on
  $n$. Assume that $\qk{n-1}k$ has no monochromatic maximal faces. The
  complex $\qbblackx n k$ is obtained by two operations: adding clones
  and contracting $1$-dimensional monochromatic faces. None of these
  operations can create a monochromatic maximal face, so $\qbblackx n
  k$ has no such faces. The rest follows using
  Lemma~\ref{l:interior}(ii) and induction.
\end{proof}

It only remains to check properties~\ref{i:close}--\ref{i:sing-same}
of $\qk n k$. This routine verification
 is left to the reader.

This concludes the proof of Theorem~\ref{t:emb}. We remark that the
explicit construction of $\qk{2k+1}k$ and $\qbblackx{2k+2}k$ can, with
minor modifications, be interpreted as a particular case of the
general construction. Since a uniform treatment would make the
exposition more complicated, we prefer to present these special cases
separately.

Theorem~\ref{t:main} is a direct consequence of Theorem~\ref{t:emb}
and the results in~\cite{KS15}. Indeed, let the graph $QG(n,k)$ be
obtained from the associated graph of $\qk n k$ by identifying
antipodal pairs of vertices (and discarding the colours).  By
Theorem~\ref{t:emb} (i)--(ii) and \cite[Lemma~3.2]{KS15}, $QG(n,k)$ is
a quadrangulation of the projective space
$\proj{n-2k}$. Theorem~\ref{t:emb} (iii) implies that the
quadrangulation is a spanning subgraph of $SG(n,k)$, while part (iv)
implies that $QG(n,k)$ contains the $(2k+1)$-circuit $QG(2k+1,k)$ and
is therefore non-bipartite. Finally, by \cite[Theorem~1.1]{KS15} and
the easy upper bound on $\chi(SG(n,k))$, the chromatic number of
$QG(n,k)$ is $n-2k+2$.


\section{Conclusion}
\label{sec:properties}

We conclude this paper with two open problems.

While the proof of Theorem~\ref{t:emb} provides a recursive
characterisation of the pairs of sets in $\V n k$ that are adjacent in
the graph $QG(n,k)$, it would be desirable to define this
graph directly, without recursion. We have no such
definition so far.

Recall that the Schrijver graph $SG(n,k)$ is a vertex-critical
subgraph of the Kneser graph $KG(n,k)$ with the same chromatic number,
namely $n-2k+2$. By Theorem~\ref{t:main}, the spanning subgraph
$QG(n,k)$ of $SG(n,k)$ has the same chromatic number, and we
conjecture that it is the natural next step in the direction set by
Schrijver:
\begin{conjecture}
  For any $k\geq 1$ and $n\geq 2k+1$, $QG(n,k)$ is edge-critical.
\end{conjecture}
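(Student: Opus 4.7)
I would approach the conjecture by induction on $n$, leveraging the recursive structure of the construction in Section~\ref{sec:construction}. The base case $n = 2k+1$ is immediate: $QG(2k+1, k)$ is the odd cycle $C_{2k+1}$, which is edge-critical.

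For the inductive step, fix an edge $e$ of $QG(n, k)$ and aim to produce a proper $(n-2k+1)$-colouring of $QG(n, k) - e$. The natural first move is to classify $e$ according to where it is created in the construction of $\qk n k$: either (a) $e$ is an edge of the sub-quadrangulation $QG(n-1, k)$ guaranteed by Theorem~\ref{t:emb}(iv); (b) $e$ is added during the passage $\qk{n-1}k \to \qbblackx n k$ via the cloning/contraction steps~\ref{i:black-out}--\ref{i:white-contract}; or (c) $e$ lies inside the hemisphere fill identified with $\qbwhite{n-2}{k-1}$ and, via the relabelling in step~\ref{i:relabel}, corresponds (through the bijection $g_n$) to an edge of $QG(n-2, k-1)$. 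In case (a), the inductive hypothesis supplies a proper $(n-2k)$-colouring of $QG(n-1, k) - e$, which one then attempts to extend across the two hemisphere fills using the single additional colour. Case (c) is symmetric, applying induction on $QG(n-2, k-1)$ and then completing the colouring on the equator. Edges of type (b) live in specific quadrangular faces dictated by the cloning rule and can hopefully be handled by locally modifying a proper colouring of $QG(n, k) - e'$ for a carefully chosen nearby edge $e'$.

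The main obstacle will be compatibility along the equator $QG(n-1, k)$, which the two hemisphere fills share. A proper $(n-2k)$-colouring of $QG(n-1, k) - e$ need not be the restriction of any proper $(n-2k+1)$-colouring of the whole $QG(n, k) - e$, and conversely a proper colouring of a fill restricts rigidly to the equator. Circumventing this likely requires either a precolouring-extension argument showing that \emph{some} proper $(n-2k)$-colouring of $QG(n-1, k) - e$ does admit an extension, or a topological refinement proving that removing $e$ strictly drops the $\mathbb{Z}_2$-index of a suitable hom or box complex of $QG(n, k)$, thereby breaking the topological lower bound of \cite[Theorem~1.1]{KS15}. A secondary tool that could considerably reduce the case analysis is the cyclic symmetry inherited from $C_n$: the rotation $i \mapsto i+1$ on $[n]$ induces an automorphism of $SG(n, k)$, and if $QG(n, k)$ inherits enough of this $\mathbb{Z}_n$-symmetry, the problem reduces to verifying edge-criticality on one representative per orbit.
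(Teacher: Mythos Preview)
This statement is a \emph{conjecture} in the paper, not a theorem; the paper offers no proof of it. The only remarks the paper makes are that the case $n=2k+1$ is clear (odd cycles) and that the case $n=2k+2$ follows from a result of Gimbel and Thomassen. There is therefore no ``paper's own proof'' to compare your proposal against.

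As for your outline itself, it is a reasonable plan of attack but not a proof, and you already identify the genuine gap: the compatibility problem at the equator. In case (a), having an $(n-2k)$-colouring of $QG(n-1,k)-e$ is far from enough, since the two hemisphere fills (each essentially a copy of $QG(n-2,k-1)$ glued in) impose constraints on the equatorial colouring that a generic colouring will not meet; you would need a precolouring-extension statement that is itself at least as hard as the conjecture. The topological alternative you mention, showing that deleting any edge strictly lowers the $\mathbb Z_2$-index of an associated complex, is likewise not established anywhere and would require new ideas. Finally, your hoped-for $\mathbb Z_n$-symmetry is almost certainly unavailable: the construction of $\qk n k$ breaks cyclic symmetry at every step (the definitions of $\core A$, $\Lambda_{n,i}$, $\clone n B$, and the use of $\lex$ all single out particular elements of $[n]$), so $QG(n,k)$ as defined has no obvious nontrivial automorphisms to exploit, and reducing to one edge per orbit is not justified.
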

The conjecture is clearly true for $n=2k+1$ (odd cycles) and its
validity for $n=2k+2$ can be derived from a result of Gimbel and
Thomassen~\cite{GT97}.

\section*{Acknowledgements}

This project was started while the second author was visiting Fabricio
Benevides and V\'ictor Campos at the Universidade Federal do Cear\'a.


\end{document}